\theoremstyle{plain}
\newtheorem{theorem}{Theorem}
\newtheorem{corollary}{Corollary}
\newtheorem{proposition}{Proposition}
\newtheorem*{TA}{Theorem}
\theoremstyle{definition}
\theoremstyle{remark}
\numberwithin{equation}{section}
\newcommand{\R}{\mathbb R}
\newcommand{\Z}{\mathbb Z}
\newcommand{\C}{\mathbb C}
\newcommand{\Rn}{\mathbb R^n}
\begin{document}

\title[Unique continuation]{Unique Continuation for 
Schr\"odinger Evolutions, with applications to profiles of concentration and traveling waves}
\author{L. Escauriaza}
\address[L. Escauriaza]{UPV/EHU\\Dpto. de Matem\'aticas\\Apto. 644, 48080
Bilbao, Spain.}
\email{luis.escauriaza@ehu.es}
\thanks{The first and fourth authors are supported  by MEC grant,
MTM2004-03029, the second and third authors by NSF grants DMS-0456583 and
DMS-0456833 respectively}
\author{C. E. Kenig}
\address[C. E. Kenig]{Department of Mathematics\\University of
Chicago\\Chicago, Il. 60637 \\USA.}
\email{cek@math.uchicago.edu}
\author{G. Ponce}
\address[G. Ponce]{Department of Mathematics\\
University of California\\
Santa Barbara, CA 93106\\
USA.}
\email{ponce@math.ucsb.edu}
\author{L. Vega}
\address[L. Vega]{UPV/EHU\\Dpto. de Matem\'aticas\\Apto. 644, 48080
Bilbao, Spain.}
\email{luis.vega@ehu.es}
\keywords{Schr\"odinger evolutions}
\subjclass{Primary: 35Q55}
\begin{abstract}
We prove unique continuation properties for solutions of the evolution Schr\"odinger equation with 
time dependent potentials. As an application of our method we also obtain results concerning 
the possible  concentration profiles of   blow up solutions 
and the possible profiles of the traveling waves solutions of semi-linear Schr\"odinger equations. 
\end{abstract}
\maketitle
\begin{section}{Introduction}\label{S: Introduction}
In this paper we continue our study initiated in \cite{ekpv06}, 
\cite{ekpv08}, \cite{ekpv08b}, and \cite{ekpv09} on unique continuation properties of
solutions of Schr\"odinger equations. To begin  with we consider the linear equation
\begin{equation}
\label{e1}
 \partial_t u =i( \Delta u + V(x,t) u),\;\;\;\;\;\; (x,t)\in \R^n\times [0,\infty).
\end{equation}

We shall be  interested in finding the 
strongest possible space decay of global solutions of \eqref{e1}. In this direction our first results are the following ones:

\begin{theorem}
\label{theorem4}

Let $u\in C([0,\infty) :L^2(\Rn))$ be a solution of the equation \eqref{e1}
with a real potential $V\in L^{\infty}(\R^{n}\times [0,\infty))$ satisfying
that
\begin{equation}
\label{split-pot}
V(x,t)=V_1(x,t)+V_2(x,t),
\end{equation}
with $V_j,\,j=1,2 $ real valued,
\begin{equation}
\label{potcon2bc}
|V_1(x,t)|\leq \frac{c_1}{\langle x\rangle^{\alpha}}= \frac{c_1}{(1+|x|^2)^{\alpha/2}},\;\;\;\;\;\;\;0\leq \alpha<1/2,
\end{equation}
and $V_2$ supported in $\{(x,t)\,:\,|x|\geq 1\}$ such that
\begin{equation}
\label{potcon2bd}
-(\partial_rV_2(x,t))^{-}\leq \frac{c_2}{|x|^{2\alpha}},\;\;\;\;\;\;\;\;\;\;a^{-}=min\{a;0\}.
\end{equation}
Then there exists a constant $\lambda_0=\lambda_0(\|V\|_{L^{\infty}(\R^{n}\times [0,\infty))};c_1;c_2;\alpha)>0$
such that if
\begin{equation}
\label{preserved-2bb}
\sup_{t\geq 0}\;\int_{\R^n}\,e^{\lambda_0\,|x|^{p}} \,|u(x,t)|^2\,dx < \infty,\;\;\;\;\;\;\text{with}\;\;\;\;\;\;\;p=(4-2\alpha)/3,
\end{equation}
then
\begin{equation}
\label{conclu3bc}
u\equiv 0.
\end{equation}
\end{theorem}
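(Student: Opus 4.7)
I would argue by contradiction, following the ``upper bound vs.\ lower bound'' scheme developed in the earlier papers of this program. Suppose $u\not\equiv 0$. The plan is (i) to derive, via a Carleman inequality for $\partial_{t}-i\Delta-iV$ with a weight $e^{\phi_{\la}(x,t)}$ whose spatial profile scales like $\la|x|^{p}$, an upper estimate that, combined with \eqref{preserved-2bb}, controls $\int\!\int e^{2\la|x|^{p}}|u|^{2}$ uniformly in the large parameter $\la$; (ii) to establish a matching quantitative lower bound of the form
\[
\int_{R\leq|x|\leq 2R}|u(x,t_{0})|^{2}\,dx\ \ge\ c\,e^{-CR^{p}}
\]
at some $t_{0}>0$ and all large $R$, in the spirit of the lower bounds obtained in our previous works on this equation. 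Comparing upper and lower bounds as $\la\to\infty$ then yields a contradiction, forcing $u\equiv 0$.

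\textbf{Carleman estimate and absorption of $V_{1}$.} The Carleman step rests on choosing $\phi_{\la}$ so that the commutator between the symmetric and antisymmetric parts of the conjugated operator $e^{\phi_{\la}}(\partial_{t}-i\Delta)e^{-\phi_{\la}}$ is positive and coercive, producing a weighted $L^{2}$ term essentially of size $\la^{2}|x|^{2(p-1)}|u|^{2}$. In the regime $\al<1/2$, the potential $V_{1}$ is of strictly lower order than this commutator term and can therefore be absorbed into the right-hand side of the Carleman inequality. Balancing the coercivity gained against the losses from absorbing $V_{1}$ and from the time-regularization of $\phi_{\la}$ is precisely what selects the critical spatial exponent $p=(4-2\al)/3$.

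\textbf{Treatment of $V_{2}$.} Since $V_{2}$ need not be small in any pointwise sense, it cannot be absorbed in the same way. Instead, the sign hypothesis \eqref{potcon2bd} is brought in through a virial-type computation: pairing the equation with a multiplier of the form $x\cdot\nabla(e^{\phi_{\la}}u)$ generates a term proportional to $\int r\,\partial_{r}V_{2}\,|u|^{2}$, whose negative part is controlled by $c_{2}\int|x|^{1-2\al}|u|^{2}$ thanks to \eqref{potcon2bd}, and is thus again absorbed by the positive commutator. The condition that $V_{2}$ be supported in $\{|x|\ge 1\}$ keeps these manipulations valid away from the origin, while the behavior on $\{|x|<1\}$ is handled by the standard unique continuation for the bounded potential $V_{1}+V_{2}\chi_{\{|x|\ge 1\}}$.

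\textbf{Main obstacle.} The principal technical difficulty, and the one that pins down the exponent $p$, is the simultaneous calibration of $\phi_{\la}$ so that the positive commutator dominates both the $V_{1}$-error ($\sim c_{1}\langle x\rangle^{-\al}$) and the virial error generated by $V_{2}$ ($\sim c_{2}|x|^{1-2\al}$), while the time profile of $\phi_{\la}$ remains compatible with the sup-in-time (as opposed to two-endpoint) nature of the hypothesis \eqref{preserved-2bb}. This last point is the subtle one: uniform-in-time Gaussian control is a priori weaker than two-time Gaussian data, and the compensating mechanism --- integrating the Carleman bound over a long time interval --- is what forces $p$ strictly below the free-Schr\"odinger exponent $2$, down to the value $(4-2\al)/3$ dictated by the decay/monotonicity of the potentials.
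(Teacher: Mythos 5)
There is a genuine gap: your scheme is the ``upper bound versus lower bound, let the Carleman parameter go to infinity'' argument, and neither half of it is available here. For step (i), the hypothesis \eqref{preserved-2bb} provides the weighted bound $\sup_t\int e^{\lambda_0|x|^p}|u|^2\,dx<\infty$ for \emph{one} fixed $\lambda_0$ (which the theorem is allowed to choose large), not for all large $\lambda$; there is no self-improvement mechanism at the sub-Gaussian rate $p=(4-2\alpha)/3<4/3$ that would let you control $\iint e^{2\lambda|x|^p}|u|^2$ ``uniformly in the large parameter $\lambda$'', so the quantity you propose to compare as $\lambda\to\infty$ is simply not finite. For step (ii), the lower bound $\int_{R\le|x|\le 2R}|u(\cdot,t_0)|^2\ge c\,e^{-CR^{p}}$ is asserted ``in the spirit of previous works'', but the lower bounds in that program are Gaussian-type ($e^{-cR^2}$ or $e^{-cR^2\log R}$) and proved under different hypotheses; a persistence-of-mass estimate at the sharp rate $e^{-CR^{p}}$ with $p<4/3$ for a time-dependent bounded potential would be a result of essentially the same depth as the theorem itself. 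As written, the proposal reduces the theorem to two unproved statements, one of which is false as stated.

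The paper's actual argument avoids both issues and runs on a different engine. One conjugates with a \emph{fixed} convex weight, $f=e^{\lambda\varphi}u$ with $\varphi(r)=r^p+\beta$ for $r\ge1$ and $\lambda=\lambda_0$ fixed once and for all, splits the conjugated operator into a time-independent symmetric part $\mathcal S$ and a skew-symmetric part $\mathcal A$ (with $V_2$ placed inside $\mathcal A$ so that $[\mathcal S;\mathcal A]$ produces the term $2\lambda\,\partial_r\varphi\,\partial_r V_2$, controlled by \eqref{potcon2bd} --- close in spirit to your virial multiplier, but with $\nabla\varphi\cdot\nabla$ rather than $x\cdot\nabla$). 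A time-cutoff identity (Proposition \ref{proposition2}) first selects a sequence of good times $T_j\uparrow\infty$ at which $\|\mathcal S f(\cdot,T_j)\|_2$ is bounded; then the commutator positivity over $[T_1,T_j]$ (Proposition \ref{proposition1}) yields
\begin{equation*}
\int_{T_1}^{T_j}\!\!\int \frac{e^{2\lambda\varphi}|u|^2}{\langle x\rangle^{4-3p}}\,dx\,dt\le \tilde c_{\lambda}\qquad\text{uniformly in }j .
\end{equation*}
The contradiction then comes not from a spatial lower bound but from \emph{conservation of the $L^2$-norm} (this is where the reality of $V$, which your proposal never uses, enters): since $e^{2\lambda\varphi}\langle x\rangle^{3p-4}\ge c>0$, the display forces $(T_j-T_1)\|u_0\|_2^2\le \tilde c_{\lambda}$ for all $j$, and letting $T_j\to\infty$ gives $u_0=0$. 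Your closing remark about ``integrating the Carleman bound over a long time interval'' gestures at this, but in the paper that integration \emph{is} the proof, with mass conservation supplying the divergent left-hand side, rather than a device for calibrating the exponent $p$ (which is fixed simply by matching $\nabla\varphi\,D^2\varphi\,\nabla\varphi\sim r^{3p-4}$ against $|V_1|^2\sim r^{-2\alpha}$).
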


 As an immediate consequence of Theorem \ref{theorem4} we have:
 
 \begin{corollary}
\label{corollary13}

Let $u\in C([0,\infty):L^2(\Rn))$ be a solution of the equation \eqref{e1} with a real potential $V\in L^{\infty}(\R^{n}\times [0,\infty))$. If
\begin{equation}
\label{13a}
|V(x,t)|\leq \frac{c_1}{\langle x\rangle^{\alpha}}= \frac{c_1}{(1+|x|^2)^{1/2}},
\end{equation}
and   for some $p>1$ and $\lambda_0>0$ 
 \begin{equation}
 \label{13b}
sup_{t\geq 0}\;\int_{\R^n}\,e^{\lambda_0\,|x|^{p}} \,|u(x,t)|^2\,dx <\infty,
 \end{equation}  
 then $u\equiv 0$.

\end{corollary}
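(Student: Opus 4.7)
The plan is to deduce the corollary from Theorem \ref{theorem4} by a suitable splitting of the potential and an adjustment of parameters. The key observation is that the hypothesis of Theorem \ref{theorem4} requires splitting the potential into a (small in decay but otherwise arbitrary) piece $V_{1}$ and a monotone-in-the-radial-direction piece $V_{2}$; this is liberal enough to allow the trivial choice $V_{2}\equiv 0$, putting the whole potential into $V_{1}$.

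First I would normalize the exponent. Since $\langle x\rangle^{-1}\leq\langle x\rangle^{-\beta}$ for every $\beta\in[0,1]$, and since for any fixed $\lambda_{0}>0$ and any $1<p'<p$ one has $\lambda\,|x|^{p'}\leq \lambda_{0}\,|x|^{p}+C(\lambda,\lambda_{0},p,p')$ for all $x\in\R^{n}$, hypothesis \eqref{13b} implies an analogous bound with the same $\lambda_{0}$ (after possibly shrinking it) and any smaller exponent $p'\in(1,p)$. Thus we may assume without loss of generality that $p\in(1,4/3]$.

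Next I would set $\alpha:=(4-3p)/2$, which lies in $[0,1/2)$ and satisfies $p=(4-2\alpha)/3$, and I would split $V=V_{1}+V_{2}$ by taking
\[
V_{1}(x,t):=V(x,t),\qquad V_{2}(x,t):=0.
\]
Then \eqref{potcon2bc} is verified since $|V_{1}(x,t)|\leq c_{1}\langle x\rangle^{-1}\leq c_{1}\langle x\rangle^{-\alpha}$ (using $\alpha\leq 1$), and \eqref{potcon2bd} together with the support condition on $V_{2}$ holds trivially because $V_{2}\equiv 0$.

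Finally, matching the $\lambda_{0}$ of \eqref{13b} against the threshold produced by Theorem \ref{theorem4} may require a further shrinking of $p$ within $(1,4/3]$ (and the corresponding enlargement of $\alpha$ within $[0,1/2)$), using the same upgrade-of-exponent observation as in the first step. Once this is arranged, Theorem \ref{theorem4} applies and yields $u\equiv 0$. There is no serious obstacle here; the only point that requires a little care is the quantitative matching of the constant $\lambda_{0}$ given by Theorem \ref{theorem4} with the one available in hypothesis \eqref{13b}, which is handled by the elementary comparison of Gaussian-type weights $e^{\lambda|x|^{p}}$ for different values of $p>1$.
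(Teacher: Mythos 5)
Your proof is correct and follows the paper's intended route: Corollary \ref{corollary13} is stated there as an immediate consequence of Theorem \ref{theorem4} with the trivial splitting $V_1=V$, $V_2\equiv 0$, and your reduction---fixing a strictly smaller exponent $p'\in(1,\min\{p,4/3\})$ with $\alpha=(4-3p')/2\in[0,1/2)$ so that $\lambda|x|^{p'}\leq \lambda_0|x|^{p}+C$ for \emph{every} $\lambda>0$, which makes whatever threshold Theorem \ref{theorem4} demands automatically attainable---is exactly the elementary step the paper leaves implicit. No gaps; only note that once $p'<p$ is fixed, a single application of the weight comparison suffices (no iterative shrinking is needed), since the threshold $\lambda_0(\|V\|_\infty;c_1;c_2;\alpha)$ is then a fixed finite number and all coefficients $\lambda$ are available.
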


\begin{theorem}
\label{theorem4b}

Let $u\in C([0,\infty):L^2(\Rn))$ be a solution of the equation \eqref{e1}
with a real potential $V\in L^{\infty}(\R^{n}\times [0,\infty))$ satisfying
that
\begin{equation}
\label{split-potb}
V(x,t)=V_1(x,t)+V_2(x,t),
\end{equation}
with $V_j,\,j=1,2 $ real valued,
\begin{equation}
\label{potcon2bcd}
|V_1(x,t)|\leq \frac{c_1}{\langle x\rangle^{1/2+\epsilon_0}}= \frac{c_1}{(1+|x|^2)^{1/4+\epsilon_0/2}},\;\;\;\;\;\epsilon_0>0,
\end{equation}
and $V_2$ supported in $\{(x,t)\,:\,|x|\geq 1\}$ such that
\begin{equation}
\label{potcon2bde}
-(\partial_rV_2(x,t))^{-}\leq \frac{c_2}{|x|^{1+\epsilon_0}},\;\;\;\;\;\;\;\;\;\;a^{-}=min\{a;0\}.
\end{equation}
Then there exists a constant $\lambda_0=\lambda_0(\|V\|_{L^{\infty}(\R^{n}\times [0,\infty))};c_1;c_2;\epsilon_0)>0$
such that if
\begin{equation}
\label{preserved-2bbb}
\sup_{t\geq 0}\;\int_{\R^n}\,e^{\lambda_0\,|x|} \,|u(x,t)|^2\,dx < \infty,
\end{equation}
then
\begin{equation}
\label{conclu3bcd}
u\equiv 0.
\end{equation}
\end{theorem}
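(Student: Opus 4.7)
The plan is to adapt the Carleman-estimate machinery used in the proof of Theorem~\ref{theorem4} to the critical exponent $p=1$. Observe that the formula $p=(4-2\al)/3$ gives $p=1$ precisely at $\al=1/2$, so Theorem~\ref{theorem4b} is the endpoint of Theorem~\ref{theorem4} at which $\al=1/2$ is barely excluded. The strengthened hypotheses \eqref{potcon2bcd}, \eqref{potcon2bde}, with decays $\langle x\rangle^{-1/2-\e_0}$ and $|x|^{-1-\e_0}$ respectively, are designed exactly to provide the strictly positive margin $\e_0$ by which the potential errors will be absorbed into the dominant positive term arising from the Carleman weight.

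I would argue by contradiction, assuming $u\not\equiv 0$. For a smooth regularization $\phi_\la(x)$ of $\la|x|$ away from the origin and a large parameter $\la$, set $w=e^{\phi_\la}u$ and conjugate \eqref{e1}: the resulting operator splits into a symmetric part and a skew-symmetric part, and the standard positive-commutator computation produces a virial-type identity whose leading positive contribution is a weighted integral against the Hessian $D^2\phi_\la$. Since $\phi_\la$ is radial, this Hessian is nonnegative, of size $\la\langle x\rangle^{-1}$ on the tangential subspace, and vanishes in the radial direction. The global assumption \eqref{preserved-2bbb} ensures that $w\in L^\infty_tL^2_x$ as long as $\la\le \la_0/2$, so the Carleman machinery is applicable.

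Next I would control the two potential contributions against this leading term. The $V_1$ piece contributes an error pointwise bounded by $c_1^2\langle x\rangle^{-1-2\e_0}$ after Cauchy--Schwarz, which is smaller than $\la\langle x\rangle^{-1}$ once $\la$ is chosen large (and small errors inside a fixed ball are harmless). For $V_2$, supported in $\{|x|\geq 1\}$, one integrates by parts in the radial direction against $\partial_r\phi_\la\approx\la$, transferring the radial derivative onto $V_2$; only the negative part $-(\partial_rV_2)^{-}\leq c_2|x|^{-1-\e_0}$ can obstruct positivity, and this is again dominated by $\la|x|^{-1}$ for $\la$ sufficiently large. This radial integration by parts is the classical device that converts the monotonicity hypothesis \eqref{potcon2bde} into an effective estimate. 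Combined with a lower-bound argument of the type used in \cite{ekpv08b,ekpv09}, which forces a nonzero $u$ to carry definite mass on some annular region at some time, this yields the desired contradiction.

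The main obstacle is the criticality of the exponent $p=1$: the Hessian term, the squared $V_1$ error, and the $\partial_rV_2$ error all scale like $|x|^{-1}$ at the borderline, so the whole argument closes only because \eqref{potcon2bcd} and \eqref{potcon2bde} beat this scaling by a definite power $\e_0$. Secondary technical points are the regularization of $\phi_\la$ at the origin (for instance by replacing $|x|$ with $\sqrt{\delta^2+|x|^2}$ and sending $\delta\downarrow 0$) and the justification of the formal commutator identities, obtained by approximating $u$ by smooth, compactly supported data and using the uniform $L^\infty_tL^2_x$ bound on $w$ to pass to the limit.
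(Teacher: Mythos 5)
Your overall architecture (conjugation by an exponential weight, symmetric/skew-symmetric splitting, positive commutator, absorption of the potential errors, contradiction with $L^2$-conservation) is indeed the paper's, and your treatment of $V_2$ through the sign of $\partial_r\varphi\,\partial_rV_2$ in the commutator is essentially what the paper does by placing $V_2$ inside the skew-symmetric operator $\mathcal A$. But there is a genuine gap at the heart of your argument: you take the weight to be (a regularization of) $\lambda|x|$ and claim the Hessian supplies positivity ``of size $\lambda\langle x\rangle^{-1}$ on the tangential subspace,'' vanishing in the radial direction. The term of the commutator $[\mathcal S;\mathcal A]$ that must dominate $|V_1|^2|f|^2$ and, more importantly, must furnish the quantitative lower bound on $\int\int |f|^2/w(x)\,dx\,dt$ used in the final step to contradict conservation of $\|u(\cdot,t)\|_2$, is the zeroth-order term $4\lambda^3\,\nabla\varphi\,D^2\varphi\,\nabla\varphi\,|f|^2=4\lambda^3\,\partial_r^2\varphi\,(\partial_r\varphi)^2\,|f|^2$. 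Since $\nabla\varphi$ is purely radial, only the radial part of the Hessian enters here, and for $\varphi=|x|$ (or $\sqrt{\delta^2+|x|^2}$ with $\delta\downarrow 0$) this term is identically zero. The tangential positivity you invoke only controls angular derivatives of $f$ through $\nabla f\,D^2\varphi\,\nabla\bar f$ and gives no control of $|f|^2$ itself; so nothing absorbs the error $c_1^2\langle x\rangle^{-1-2\epsilon_0}|f|^2$, and the endgame cannot be closed. Taking $\lambda$ large does not help, because the error term and the (vanishing) radial Hessian term multiply the same quantity $|f|^2$.

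The paper's way around this is precisely the point of Theorem \ref{theorem4b}: it replaces the linear weight by the logarithmically convexified one $\varphi(r)=3r-\int_1^r\frac{ds}{1+\log s}+\beta$ for $r\geq 1$, which is still comparable to $r$ (so the hypothesis $\sup_t\int e^{\lambda_0|x|}|u|^2\,dx<\infty$ still gives $e^{\lambda\varphi}u\in L^\infty_tL^2_x$ for $\lambda$ small relative to $\lambda_0$) but has $\partial_r^2\varphi=\frac{1}{r(1+\log r)^2}>0$, hence $\nabla\varphi\,D^2\varphi\,\nabla\varphi\geq\frac{1}{r(1+\log r)^2}$. Because $r^{2\epsilon_0}\gg(\log r)^2$ for large $r$, this strictly positive radial convexity dominates both $|V_1|^2\lesssim r^{-1-2\epsilon_0}$ and the loss $\partial_r\varphi\,(\partial_rV_2)^-\gtrsim -r^{-1-\epsilon_0}$ once $\lambda$ is large, yielding the uniform-in-$j$ bound $\int_{T_1}^{T_j}\int e^{2\lambda\varphi}|u|^2\,\langle x\rangle^{-1}(\log\langle x\rangle)^{-2}\,dx\,dt\leq\tilde c_\lambda$, which contradicts $\|u(\cdot,t)\|_2=\|u_0\|_2$ as $T_j\to\infty$ unless $u\equiv 0$. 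Without some such strictly convex modification of $|x|$ --- the one idea your proposal is missing --- the scheme does not close at the endpoint $p=1$.
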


\vskip.1in
Using  the results in \cite{ekpv08} and  \cite{ekpv08b} one sees that it suffices to assume that the hypothesis \eqref{preserved-2bb} and \eqref{preserved-2bbb}
in Theorem \ref{theorem4} and Theorem \ref{theorem4b} respectively, hold for a sequence of times 
$\{\widetilde T_j=T_0+j\,L\,:\,j\in\Z^+\}$ for some $T_0\geq 0$ and $L>0$.

The hypothesis on the real character on the potential in  these theorems is used to guarantee that the $L^2$-norm of the solution of the equation \eqref{e1} is time independent. However, it suffices to have  the  $L^2$-norm of the solution  bounded below for all time $t\in [0,\infty)$ by a positive constant, provided that $u(0)\neq 0$. Therefore, Theorem \ref{theorem4} still holds for potentials $V(x,t)$ which can be written as
$$
V(x,t)=V_1(x,t)+V_2(x,t)+V_3(x,t),
$$
with $V_1$ and $V_2$ as before  and $V_3$ complex valued satisfying  \eqref{potcon2bc} and such that
$$
\|V_3\|_{L^1([0,\infty) : L^{\infty}(\R^n))}= \int_0^{\infty} \|V_3(\cdot, t)\|_{\infty}dt <\infty.
$$
A similar remark applies to Theorem \ref{theorem4b}.

Next, we define  the \lq\lq hyperbolic" or \lq\lq ultra-hyperbolic" operator
\begin{equation}
\label{opel}
\mathcal L_k = \partial_{x_1}^2+...+\partial_{x_k}^2-\partial_{x_{k+1}}^2-...-\partial_{x_n}^2,\;\;\;\;\;\;\;k\in\{2,..,n-1\},
\end{equation}
and study  the linear dispersive equation
\begin{equation}
\label{e1b}
 \partial_t u =i( \mathcal L_k u + V(x,t) u),\;\;\;\;\;\; (x,t)\in \R^n\times \R.
\end{equation}

Nonlinear models with a non-degenerate non-elliptic operator 
$\mathcal L_k$ describing the dispersive relation arise in several mathematical and physical contexts. For example, the Davey-Stewarson system \cite{DaSt} 
\begin{equation}
\label{ds}
\begin{cases}
i\partial_t u \pm\partial_x^2 u + \partial_y^2 u = c_{_1}|u|^2 u +
c_{_2}u \partial_x \varphi, \qquad t, \, x,\,y \in \R, \\
\partial_x^2 \varphi \pm\partial_y^2 \varphi = \partial_x |u|^2, 
\end{cases}
\end{equation}
with $u = u(x,y,t)$ a complex-valued function, $\varphi=\varphi(x,y,t)$
a real-valued function and
$c_{1},\,c_2$ real parameters. The system \eqref{ds} appears 
as a model in wave propagations \cite{DaSt}  and independently as a two
dimensional completely integrable system which generalizes the integrable cubic 1-dimensional Schr\"odinger equation
\cite{AbHa}.  
Also one has the Ishimori system  \cite{Is} 
\begin{equation}
\label{Ishi}
\begin{cases}
\begin{aligned}
&\partial_t S= S \wedge (\partial_x^2 S\pm \partial_y^2 S) + 
b(\partial_x\phi \partial_y S + \partial_y\phi \partial_x S),
\qquad t, \,x, \,y\in\mathbb R, \\
&\partial_x^2\phi \mp \partial_y^2\phi= \mp2S\cdot(\partial_x S\wedge\partial_y S),  
\end{aligned}
\end{cases}
\end{equation}
where $\;S(\cdot,t): \mathbb R^2\to \mathbb R^3\;$ with $\;\|S\|=1$, $\;S\to(0,0,1)\;$ as 
$\;\|(x,y)\|\to\infty$, and $\;\wedge\;$ denotes the wedge product in $\;\mathbb R^3$. 
 This model was first proposed
 as a two dimensional generalization of the
Heisenberg equation in ferromagnetism.
 For $\;b=1\;$ the system \eqref{Ishi} has been shown to be completely 
 integrable  
(see \cite{AbHa} and references therein).

 The arguments used in the  proofs of Theorems \ref{theorem4}-\ref{theorem4b}
 do not rely on the elliptic character of the laplacian in \eqref{e1}, so we have:
 
\begin{theorem}
\label{theorem2}

Theorems \ref{theorem4}-\ref{theorem4b} and Corollary \ref{corollary13} still hold for solutions $u\in C([0,\infty) :L^2(\Rn))$ of the equation  \eqref{e1b}
with a  potential $V$ verifying the same hypotheses.
\end{theorem}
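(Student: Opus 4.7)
The plan is to retrace the proofs of Theorems \ref{theorem4} and \ref{theorem4b} with $\mathcal L_k$ in place of $\Delta$, and check that no step actually uses the ellipticity of the Laplacian. Writing $\epsilon_j=+1$ for $1\le j\le k$ and $\epsilon_j=-1$ for $k+1\le j\le n$, so that $\mathcal L_k=\sum_j\epsilon_j\partial_{x_j}^2$, the operator is still constant-coefficient and formally self-adjoint on $L^2(\Rn)$, and these are the only two structural properties of $\Delta$ that the original proofs invoke.

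First I would verify the conservation law used throughout: for real $V$, $\frac{d}{dt}\int|u|^2\,dx=-2\,\text{Im}\int\bar u(\mathcal L_k u+Vu)\,dx=0$, since the right-hand side only requires $\mathcal L_k$ to be formally self-adjoint and $V$ to be real, with no appeal to the sign of the symbol; the extension to a complex remainder $V_3\in L^1_tL^\infty_x$ carries over unchanged. Next I would revisit the weighted Carleman and log-convexity estimates that drive the conclusion. These are obtained by conjugating $\partial_t-i(\mathcal L_k+V)$ with the Gaussian weight $e^\phi$ (with $\phi$ built from $\lambda|x|^p$) and splitting the conjugated operator into its $L^2$-symmetric and $L^2$-skew parts. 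The decomposition, the resulting commutator identity, and the integrations by parts that produce the monotonicity bound are purely algebraic and transfer verbatim, once one replaces $\nabla\cdot\nabla$ by $\sum_j\epsilon_j\partial_j\cdot\partial_j$ and $|\nabla\phi|^2$ by $\sum_j\epsilon_j(\partial_j\phi)^2$ in the standard formulas.

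The main obstacle is confirming that the positivity that closes the Carleman estimate survives the indefinite signature. The principal commutator term in the elliptic case is (schematically) $\int e^{2\phi}\langle D^2\phi\,\nabla f,\nabla f\rangle\,dx$, whose sign is controlled by the convexity of $\phi$; for $\mathcal L_k$ it is replaced by the indefinite quadratic form $\int e^{2\phi}\sum_{j,\ell}\epsilon_j\epsilon_\ell(\partial_j\partial_\ell\phi)\,\partial_j f\,\partial_\ell f\,dx$. For the radial weight $\phi=\lambda|x|^p$, however, $D^2\phi$ decomposes into a radial piece along $\hat x\otimes\hat x$ and a tangential piece along $I-\hat x\otimes\hat x$, so this ultra-hyperbolic quadratic form is bounded in absolute value by $C|x|^{p-2}|\nabla f|^2$ regardless of signature. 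Combined with the elementary bounds $\bigl|\sum_j\epsilon_j(\partial_j\phi)^2\bigr|\le p^2|x|^{2p-2}$ and $|\mathcal L_k\phi|\le C|x|^{p-2}$, this yields the same absorption estimates as in the elliptic case and lets one close the log-convexity argument at the critical exponents $p=(4-2\alpha)/3$ and $p=1$; the remainder of the proofs is a transcription, and the corresponding version of Corollary \ref{corollary13} follows as before.
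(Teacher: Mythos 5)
Your overall strategy---rerun the proofs of Theorems \ref{theorem4} and \ref{theorem4b} with $\mathcal L_k$ in place of $\Delta$, conjugate by the weight, split into symmetric and skew parts, and check the commutator---is exactly the paper's route, and your treatment of $L^2$ conservation is fine. But there is a genuine gap in how you handle the indefinite signature. You correctly identify that the commutator now produces the twisted forms $\sum_{j,\ell}\epsilon_j\epsilon_\ell(\partial_j\partial_\ell\varphi)\,\partial_jf\,\overline{\partial_\ell f}$ and $\lambda^2\sum_{j,\ell}\epsilon_j\epsilon_\ell(\partial_j\partial_\ell\varphi)\,\partial_j\varphi\,\partial_\ell\varphi$, but you then only assert that these are \emph{bounded in absolute value} by $C|x|^{p-2}|\nabla f|^2$ and $C\lambda^2|x|^{3p-4}$ and claim this yields ``the same absorption estimates.'' That is not how the argument closes. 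In Theorems \ref{theorem4}--\ref{theorem4b} these are not error terms to be absorbed: they are the main terms, and their \emph{positivity with quantitative lower bounds} drives both Step 1 (the non-biharmonic part of \eqref{comm} is discarded precisely because it is nonnegative) and Step 2 (inequality \eqref{007} needs $2\lambda^2\nabla\varphi D^2\varphi\nabla\varphi\geq c\lambda^2|x|^{3p-4}$ to dominate $|V|^2\sim|x|^{-2\alpha}=|x|^{3p-4}$ for large $\lambda$, and $\nabla f D^2\varphi\nabla\bar f\geq c_\varphi|\nabla f|^2$ near the origin feeds the Poincar\'e absorption \eqref{step2-eq4b}). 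An upper bound in absolute value on a quantity you need bounded below from zero gives nothing; with only your stated estimates Steps 1--3 do not go through.

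The missing observation---which is how the paper proceeds---is that these twisted forms are in fact still \emph{positive}, with the same lower bounds as in the elliptic case. With $\widetilde\nabla=(\partial_{x_1},\dots,\partial_{x_k},-\partial_{x_{k+1}},\dots,-\partial_{x_n})$ the commutator becomes $-\lambda\bigl(4\widetilde\nabla\cdot D^2\varphi\widetilde\nabla-4\lambda^2\widetilde\nabla\varphi D^2\varphi\widetilde\nabla\varphi+\mathcal L_k\mathcal L_k\varphi\bigr)$. Since $\varphi$ is radial, increasing and convex, \eqref{hessiana} gives $D^2\varphi\geq\min\{\partial_r^2\varphi,\partial_r\varphi/r\}\,I\geq0$ as a matrix, so $\langle D^2\varphi\,\widetilde\nabla f,\widetilde\nabla\bar f\rangle\geq0$, and $\geq c_\varphi|\nabla f|^2$ on compact sets because $|\widetilde\nabla f|=|\nabla f|$; moreover $\widetilde\nabla\varphi=\tfrac{\partial_r\varphi}{r}(x_1,\dots,x_k,-x_{k+1},\dots,-x_n)$ has the same length as $\nabla\varphi$, and
\[
\widetilde\nabla\varphi\,D^2\varphi\,\widetilde\nabla\varphi=\frac{(\partial_r\varphi)^2}{r^2}\Bigl[\partial_r^2\varphi\,\frac{\mathcal Q_k(x)^2}{r^2}+\frac{\partial_r\varphi}{r}\Bigl(r^2-\frac{\mathcal Q_k(x)^2}{r^2}\Bigr)\Bigr]\geq(\partial_r\varphi)^2\,\min\Bigl\{\partial_r^2\varphi,\frac{\partial_r\varphi}{r}\Bigr\},
\]
which for $\varphi(r)=r^p$, $r\geq1$, is bounded below by $c|x|^{3p-4}$ and for the weight \eqref{newvarphi} by $c/(r(1+\log r)^2)$---exactly the bounds \eqref{hessian2a} and \eqref{hessian2}. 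Once you replace your ``bounded in absolute value, hence absorbable'' claim by this definiteness computation, the three steps of the original proofs transcribe verbatim and the theorem follows.
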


\underline{Remarks} (i) It is interesting to relate our results with those  due to  V. Z. Meshkov 
 in   \cite{Mes}:
   
   \begin{TA} 
\label{theoremA} Let  $\,w\in H^2_{loc}(\Rn)$ be a solution of 
\begin{equation}
\label{estaeq}
\Delta w+\widetilde  V(x)w = 0,\;\;\;\;x\in\Rn,\;\;\;\text{with} \;\;\;\widetilde V\in L^{\infty}(\R^n). 
\end{equation}

\begin{equation}
\label{mescon}
\text{If}\;\;\;\;\;\int  \,e^{2a|x|^{4/3}}\,|w|^2 dx <\infty, \;\;\;\forall \,a>0,\;\;\text{then}\;\;\;\;w\equiv 0.
\end{equation}
\end{TA}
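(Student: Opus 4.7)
The plan is to argue by contradiction. Assume $w\not\equiv 0$; by Aronszajn's unique continuation for $\Delta+\widetilde V$, $w$ cannot vanish on any open set, so after translation we may assume $\|w\|_{L^2(B_1)}\geq c_0>0$. Set $M=\|\widetilde V\|_\infty$, so the PDE gives the pointwise bound $|\Delta w|\leq M|w|$. The goal is to derive a lower bound of the form $\|w\|_{L^2(\{R<|x|<2R\})}\gtrsim e^{-CR^{4/3}}$ for all $R$ sufficiently large, which contradicts \eqref{mescon} as soon as $a>C\cdot 2^{4/3}$.

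The core tool is an $L^2$-Carleman estimate for $\Delta$ with the radial weight $\phi_\tau(x)=\tau\,\psi(|x|)$, $\psi(r)=r^{4/3}$ for $r\geq 1$ (suitably mollified near $0$ and, in the final application, capped near the outer boundary). The target estimate is
\begin{equation}\label{sketch-carl}
\tau^3\int e^{2\phi_\tau}|u|^2\,dx \;\leq\; C\int e^{2\phi_\tau}|\Delta u|^2\,dx,\qquad u\in C_c^\infty(\Rn\setminus\overline{B_{1/2}}),
\end{equation}
for all $\tau\geq\tau_0$. The derivation is standard: write $L_{\phi_\tau}:=e^{\phi_\tau}\Delta e^{-\phi_\tau}=S+A$ with $S$ symmetric and $A$ skew-adjoint, so that $\|L_{\phi_\tau}v\|_2^2=\|Sv\|_2^2+\|Av\|_2^2+([S,A]v,v)\geq([S,A]v,v)$. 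A direct computation shows that the leading positive contribution to $([S,A]v,v)$ is comparable to $4\tau^3\int(\psi'(r))^2\psi''(r)\,|v|^2\,dx$, and for $\psi(r)=r^{4/3}$ one has $(\psi')^2\psi''=(4/3)^2(4/9)\,r^{2/3}r^{-2/3}=64/81$ uniformly for $r\geq 1$. This is precisely why $4/3$ is the sharp threshold compatible with a bounded potential: a smaller power fails to absorb $M|u|$, while a larger one would require decay of $\widetilde V$.

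Applying \eqref{sketch-carl} to $u=\chi_R w$ for a radial cutoff $\chi_R$ equal to $1$ on $\{1<|x|<2R\}$ and supported in $\{1/2<|x|<3R\}$, one has $\Delta u=-\chi_R\widetilde V w+2\nabla\chi_R\cdot\nabla w+(\Delta\chi_R)w$. Choosing $\tau=\tau_0$ with $4CM^2\leq\tau_0^3$ absorbs the $\widetilde V$-term into the left side of \eqref{sketch-carl}; the remaining error is supported on thin inner and outer shells, with $\nabla w$ controlled by Caccioppoli applied to $|\Delta w|\leq M|w|$. A three-annulus rearrangement—comparing values of $\phi_{\tau_0}$ on the bulk against the two shells, and propagating the interior lower bound $\|w\|_{L^2(B_1)}\geq c_0$ outward via a local version of the same estimate—then yields
$$\|w\|_{L^2(\{R<|x|<2R\})}\;\gtrsim\;e^{-C R^{4/3}},$$
which is incompatible with \eqref{mescon} for $a$ large.

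The main obstacle is the derivation of \eqref{sketch-carl} with an honest lower bound on the commutator: all cross terms involving $\psi'$, $\psi''$, and the angular Laplacian must be tracked, and one must verify that for $\psi(r)=r^{4/3}$ the leading positive term $4\tau^3(\psi')^2\psi''|v|^2$ is not erased by the remaining contributions. A secondary technical point is the treatment of the outer boundary in the final step, typically handled by taking $\psi$ to level off outside a ball of radius comparable to $R$ so that the weight on the outer shell does not exceed the weight on the bulk—an analogue of the conformal capping exploited in the authors' companion works on Schr\"odinger evolutions.
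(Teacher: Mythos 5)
The paper itself does not prove this statement: it is Meshkov's theorem, quoted from \cite{Mes}, and the authors instead recover (and sharpen) it as the special case $\alpha=0$, $\widetilde V_2\equiv 0$ of Theorem \ref{theorem10}, which is obtained by feeding the stationary solution $v(x,t)=e^{i\zeta t}w(x)$ into the time-dependent machinery of Theorem \ref{theorem4}: the weight $e^{\lambda\varphi}$ with $\varphi\sim r^{4/3}$ enters through the symmetric/skew-symmetric splitting of Propositions \ref{proposition1}--\ref{proposition2}, positivity of $[\mathcal S;\mathcal A]$ gives the space-time bound \eqref{step2-eq1}, and conservation of the $L^2$-norm in $t$ supplies the lower bound forcing $u\equiv0$; no unique continuation or annulus iteration is used, and only a single $\lambda_0$ depending on $\|\widetilde V\|_{\infty}$ is required rather than decay for every $a>0$. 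Your proposal is the classical direct elliptic route (essentially Meshkov's own): a stationary Carleman inequality with weight $e^{\tau|x|^{4/3}}$, whose key computation you carry out correctly --- $(\psi')^2\psi''\equiv 64/81$ is exactly why $4/3$ is critical for bounded potentials --- followed by absorption of $\widetilde V$ for $\tau^3\gtrsim\|\widetilde V\|_{\infty}^2$ and a shell comparison giving $\|w\|_{L^2(\{R<|x|<2R\})}\gtrsim e^{-CR^{4/3}}$. This is sound and buys a purely elliptic argument, at the price of a weaker quantitative conclusion: the fixed $\tau$ you end up with must also be large enough that $e^{\tau\,2^{4/3}}$ times the $L^2$-mass of $w$ on a reference annulus dominates $e^{\tau}$ times the $H^1$-mass on the inner cutoff shell, so your constant $C$ (hence the threshold value of $a$) depends on the solution, and you genuinely need the hypothesis for all $a>0$. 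The one step you should make explicit is precisely this point: in the ``three-annulus rearrangement'' the region where you invoke the interior lower bound must sit at radii where the weight strictly exceeds its value on the inner shell (e.g.\ $\{3/2<|x|<2\}$ against $\{1/2<|x|<1\}$), since otherwise no choice of $\tau$ suppresses the inner error term; as written, ``propagating the interior lower bound outward'' leaves this comparison implicit.
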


It was also proved in \cite{Mes} that for complex valued potentials $\widetilde V$ the exponent $4/3$ in \eqref{mescon} is
optimal. 

 We observe that if the potential in \eqref{e1} $V(x,t)$ is time independent $V=\widetilde V(x)$, then  a solution of $w(x)$  of \eqref{estaeq} 
 is a  stationary solutions of the IVP \eqref{e1}. Also for time independent  potential  $V(x,t)=\widetilde V(x)$, if $w(x)$ 
 is an $H^1$-solution of the eigenvalue problem
\begin{equation}
\label{eigen}
\Delta w + \widetilde V(x) w=\zeta w,
\end{equation}
then one has that for $\zeta \in\R$ 
\begin{equation}
\label{ground}
v(x,t)= e^{i \zeta t}\,w(x),
\end{equation}
is a  solution of the IVP \eqref{e1} for which Theorems \ref{theorem4}-\ref{theorem4b} apply. 
As it was mentioned above the assumption on the real character on the potential in  these theorems is only required to guarantee that the $L^2$-norm of the solution of the equation \eqref{e1} is time independent.  In the case described in \eqref{eigen}-\eqref{ground} the solution $v(x,t)$ preserves the $L^2$-norm and so the proof of 
Theorems \ref{theorem4}-\ref{theorem4b} can be carried out. 
Hence, taking $V_2\equiv 0$ one has  the following  results which recovers that in \cite{Mes} mentioned above, and improves 
and generalizes those in \cite{SP}:
 
 \begin{theorem}
\label{theorem10}

Let $w\in H^1(\R^n)$ be a solution of the equation \eqref{eigen}
with a complex potential $\widetilde V\in L^{\infty}(\R^{n})$ satisfying
\begin{equation}
\label{10a}
\widetilde V(x)=\widetilde V_1(x)+\widetilde V_2(x),
\end{equation}
such that
\begin{equation}
\label{10b}
|\widetilde V_1(x)|\leq \frac{c_1}{\langle x\rangle^{\alpha}}= \frac{c_1}{(1+|x|^2)^{\alpha/2}},\;\;\;\;\;\;\;0\leq \alpha<1/2,
\end{equation}
and $\widetilde V_2$ real valued and supported in $\{ x\in \R^n\,:\,|x|\geq 1\}$ such that
\begin{equation}
\label{10c}
-(\partial_r \widetilde V_2(x))^{-}\leq \frac{c_2}{|x|^{2\alpha}},\;\;\;\;\;\;\;\;\;\;a^{-}=min\{a;0\}.
\end{equation}
Then there exists a constant $\lambda_0=\lambda_0(\|\widetilde V\|_{L^{\infty}(\R^{n})};c_1;c_2;\alpha)>0$
such that if
\begin{equation}
\label{preserved-3}
\int_{\R^n}\,e^{\lambda_0\,|x|^p} \,|w(x)|^2\,dx < \infty,\;\;\;\;\;\;\text{with}\;\;\;\;\;\;\;p=(4-2\alpha)/3,
\end{equation}
then
\begin{equation}
\label{conclu3}
w\equiv 0.
\end{equation}

Moreover, if \eqref{10b} and \eqref{10c} holds $\alpha>1/2$ and \eqref{preserved-3} holds with $p=1$ and large $\lambda_0=\lambda_0(\|\widetilde V\|_{L^{\infty}(\R^{n})};c_1;\alpha)>0$, then $ w\equiv 0$.

\end{theorem}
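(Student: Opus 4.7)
The plan is to reduce Theorem \ref{theorem10} to Theorem \ref{theorem4} (for the first assertion) and Theorem \ref{theorem4b} (for the second) via the standing-wave ansatz already isolated in the discussion preceding the statement, and to verify that the only place the earlier theorems use real-valuedness of the potential is in securing a positive lower bound on the $L^2$ norm of the evolution — a property that holds automatically for the standing wave.

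First I would take $\zeta\in\R$ (absorbing $\Re\zeta$ into $\widetilde V_2$ is harmless, as \eqref{10c} is insensitive to adding a bounded constant to $\widetilde V_2$) and set
$$
v(x,t)=e^{i\zeta t}w(x),\qquad (x,t)\in\R^n\times[0,\infty).
$$
A direct computation from $\Delta w+\widetilde V w=\zeta w$ gives $\partial_t v=i(\Delta v+\widetilde V v)$, so $v\in C([0,\infty):L^2(\R^n))$ solves \eqref{e1} with time-independent potential $\widetilde V(x)$. The splitting $\widetilde V=\widetilde V_1+\widetilde V_2$ then plays the role of \eqref{split-pot}, and \eqref{10b}-\eqref{10c} are exactly the bounds \eqref{potcon2bc}-\eqref{potcon2bd} with the same constants $c_1,c_2,\alpha$. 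Because $|v(x,t)|\equiv|w(x)|$, the hypothesis \eqref{preserved-3} yields
$$
\sup_{t\ge 0}\int_{\R^n}e^{\lambda_0|x|^p}|v(x,t)|^2\,dx=\int_{\R^n}e^{\lambda_0|x|^p}|w(x)|^2\,dx<\infty,
$$
which is \eqref{preserved-2bb} with $p=(4-2\alpha)/3$.

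The residual obstacle is that $\widetilde V_1$ is permitted to be complex-valued, whereas Theorem \ref{theorem4} is stated for real $V$. As the authors emphasize in the discussion following Theorem \ref{theorem4b}, this real-valuedness is invoked \emph{only} to guarantee that $\|u(\cdot,t)\|_{L^2}$ stays bounded below by a positive constant. In our setting $\|v(\cdot,t)\|_{L^2}=\|w\|_{L^2}$ is literally constant, regardless of the real character of $\widetilde V_1$, so the proof of Theorem \ref{theorem4} applies verbatim and gives $v\equiv 0$, hence $w\equiv 0$. The second assertion is handled identically: with $\alpha>1/2$, set $\epsilon_0=\alpha-1/2>0$; then \eqref{10b}, \eqref{10c} and \eqref{preserved-3} with $p=1$ become \eqref{potcon2bcd}, \eqref{potcon2bde}, \eqref{preserved-2bbb}, and the same reduction together with Theorem \ref{theorem4b} forces $w\equiv 0$. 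I expect the main conceptual hurdle to be the explicit verification, inside the proofs of Theorems \ref{theorem4}-\ref{theorem4b} (not visible in the excerpt), that real-valuedness of $V$ intervenes \emph{solely} through the $L^2$-conservation step; the extension already granted by the authors to include a complex piece $V_3\in L^1_tL^\infty_x$ strongly supports that this is the only dependence, so the reduction is clean.
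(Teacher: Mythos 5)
Your reduction is exactly the paper's own argument: the authors prove Theorem \ref{theorem10} by passing to the standing wave $v(x,t)=e^{i\zeta t}w(x)$, observing that $\|v(\cdot,t)\|_{L^2}=\|w\|_{L^2}$ is conserved so the real-valuedness hypothesis in Theorems \ref{theorem4} and \ref{theorem4b} (used only for $L^2$-conservation) can be dispensed with, and then invoking those theorems with $p=(4-2\alpha)/3$ and, for $\alpha>1/2$, with $\epsilon_0=\alpha-1/2$. The proposal is correct and takes essentially the same route.
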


In \cite{SP} under the hypotheses $\widetilde V_2=0$,  \eqref{10b} and  \eqref{preserved-3}, but for all $\lambda_0>0$,  on the complex potential $V(x,t)$  on Theorem \ref{theorem10} it was shown that the eigenfunction $w(x)$ solution of \eqref{eigen} corresponding to the real eigenvalue $\zeta$
satisfies  $ w\equiv 0$.

 We observe that the conclusion of Corollary \ref{corollary13} applies, i.e. if $\widetilde V_2=0$, $\alpha=1/2$ in \eqref{10b},
 and \eqref{preserved-3} holds for some $p>1$ and $\lambda_0>0$, then $w\equiv 0$. In this direction we have the following 
 improvement of the result in Theorem \ref{theorem10} concerning the case $\alpha=1/2$ in \eqref{10b} and \eqref{10c}.
 
 \begin{theorem}
\label{theorem20}

Let $w\in H^1(\R^n)$ be a solution of the equation \eqref{eigen}
with a  potential $\widetilde V\in L^{\infty}(\R^{n})$ satisfying
\begin{equation}
\label{20a}
\widetilde V(x)=\widetilde V_1(x)+\widetilde V_2(x),
\end{equation}
such that $\widetilde V_1$ is complex valued with 
\begin{equation}
\label{20b}
|\widetilde V_1(x)|\leq \frac{c_1}{\langle x\rangle^{1/2}}= \frac{c_1}{(1+|x|^2)^{1/4}},
\end{equation}
and $\widetilde V_2$ is real valued and supported in $\{ x\in \R^n\,:\,|x|\geq 1\}$ such that
\begin{equation}
\label{20c}
-(\partial_r \widetilde V_2(x))^{-}\leq \frac{c_2}{|x|},\;\;\;\;\;\;\;\;\;\;a^{-}=min\{a;0\}.
\end{equation}
Then there exists a constant $\lambda_0=\lambda_0(\|\widetilde V\|_{L^{\infty}(\R^{n})};c_1;c_2)>0$
such that if
\begin{equation}
\label{preserved-30}
\int_{\R^n}\,e^{\lambda_0\,|x|} \,|w(x)|^2\,dx < \infty,
\end{equation}
then
\begin{equation}
\label{conclu30}
w\equiv 0.
\end{equation}
\end{theorem}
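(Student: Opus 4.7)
The strategy is to reduce Theorem \ref{theorem20} to the evolution setting and then run a sharpened version of the Carleman machinery underpinning Theorem \ref{theorem4b}. Setting $v(x,t) = e^{i\zeta t} w(x)$ produces a solution $v \in C([0,\infty):L^2(\Rn))$ of $\partial_t v = i(\Delta v + \widetilde V(x) v)$ whose $L^2$ norm is conserved, since $|v(x,t)| = |w(x)|$ for all $t$. The hypothesis \eqref{preserved-30} translates verbatim into the time-uniform bound \eqref{preserved-2bbb}. One cannot, however, invoke Theorem \ref{theorem4b} directly, because the decay $\alpha = 1/2$ of $\widetilde V_1$ corresponds exactly to the excluded endpoint $\epsilon_0 = 0$.

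Instead, the plan is to exploit the fact that $v$ depends on time only through a unitary phase, so that the parabolic Carleman estimates of \cite{ekpv08b}, \cite{ekpv09} collapse to elliptic Carleman inequalities for $\Delta - \zeta$ that carry an extra $|x|^{-1}$-weighted gain on the left. Concretely, I aim at an inequality of the form
\begin{equation*}
\lambda \int e^{2\lambda |x|} |g|^2 \, dx + \lambda \int e^{2\lambda |x|} \frac{|g|^2}{|x|} \, dx \leq C \int e^{2\lambda |x|} |(\Delta - \zeta) g|^2 \, dx,
\end{equation*}
valid for smooth $g$ supported in $\{|x| \geq R\}$ and all $\lambda \geq \lambda_*$. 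This is proved via the usual conjugation $P_\lambda = e^{\lambda |x|}(\Delta - \zeta) e^{-\lambda |x|}$, splitting $P_\lambda$ into its symmetric and antisymmetric parts, and applying the positive commutator method; the linear weight $|x|$ furnishes the $|x|^{-1}$-gain through the positivity of the radial commutator.

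With this inequality in hand, apply it to $g = \eta w$, where $\eta$ is a radial cutoff vanishing on $\{|x| \leq R_0\}$. Expanding $(\Delta - \zeta)(\eta w) = \eta (\widetilde V_1 + \widetilde V_2) w + [\Delta, \eta] w$, I bound the $\widetilde V_1$ contribution by $c_1^2 \int e^{2\lambda |x|} |w|^2 / |x| \, dx$, which is absorbed by the second term on the left once $\lambda \gg c_1^2$. The $\widetilde V_2$ contribution is treated by pairing the eigenvalue equation against a virial-type multiplier of the form $e^{2\lambda |x|}\bigl(\partial_r + \tfrac{n-1}{2|x|}\bigr) w$; the sign condition $-(\partial_r \widetilde V_2)^- \leq c_2/|x|$ then produces a non-negative principal term plus an $O(c_2/|x|)$ remainder that is again absorbed once $\lambda \gg c_2$. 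The commutator $[\Delta, \eta] w$ is supported in a fixed annulus and is a priori finite thanks to \eqref{preserved-30}. Sending $\lambda \to \infty$ forces $w \equiv 0$ outside $\{|x| \leq R_0\}$, and Aronszajn's classical interior unique continuation theorem for $\Delta + (\widetilde V - \zeta)$ with bounded coefficients extends this to $w \equiv 0$ on $\Rn$.

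The main obstacle is the critical endpoint nature of the Carleman inequality: at $\alpha = 1/2$ the potential weight $c_1^2 |x|^{-1}$ matches \emph{exactly} the order of the $|x|^{-1}$-gain delivered by the linear Carleman weight, leaving no slack for a purely perturbative treatment of $\widetilde V_1$ and explaining the dependence $\lambda_0 = \lambda_0(\|\widetilde V\|_\infty, c_1, c_2)$. Simultaneously balancing this $|x|^{-1}$ slot against the $\widetilde V_2$ virial contribution, which lives on the same scale, is the technical heart of the argument; the joint convexity of the weight and the one-sided monotonicity of $\widetilde V_2$ are what make it possible to close the estimate at the borderline.
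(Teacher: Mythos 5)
Your overall orientation is right and matches the paper's: Theorem \ref{theorem20} is treated as a purely elliptic statement, proved via a Carleman inequality for the linear weight $e^{\tau|x|}$ on the exterior of a small ball, with the critical potential $|\widetilde V_1|\le c_1|x|^{-1/2}$ absorbed by an $|x|^{-1}$-weighted gain on the left (this is exactly the role of Theorem \ref{theorem20a}, which gives $\tau^{3}\int |x|^{-1}e^{2\tau|x|}|g|^2\le \int e^{2\tau|x|}|(\Delta+\widetilde V)g|^2$), and with $\widetilde V_2$ fed into the symmetric part so that only $\partial_r\widetilde V_2$ appears in the commutator. However, two steps of your proposal do not survive scrutiny. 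First, the inequality you state contains the unweighted term $\lambda\int e^{2\lambda|x|}|g|^2$ on the left, and this is false for $g$ supported in $\{|x|\ge \rho\}$: the Hessian of $|x|$ is $|x|^{-1}\bigl(I-\tfrac{x\otimes x}{|x|^2}\bigr)$, which vanishes in the radial direction and decays like $|x|^{-1}$, so the weight is asymptotically flat; a quasimode $e^{-\lambda|x|}e^{i\lambda\omega\cdot x}\chi((x-x_0)/L)$ with $\omega\perp x_0$, $L=\lambda$ and $|x_0|\gg L^2$ violates the unweighted bound. Only the $|x|^{-1}$-weighted term is available, and even it does not come from ``positivity of the radial commutator'' (there is none); it requires the separate elliptic identity the paper uses in \eqref{abc1}, which trades $\tau^2=\mathcal S-\Delta-\widetilde V_2$ against $\tau/|x|$ and exploits $-\Delta\ge 0$ and the $\tau^2$ in $\mathcal S$.

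The more serious gap is the endgame. You use only an inner cutoff and then ``send $\lambda\to\infty$'' to conclude $w\equiv 0$ outside a ball. But the Carleman inequality holds for compactly supported functions, and extending it to $\eta w$ for large $\lambda$ requires $e^{\lambda|x|}w\in L^2$ for that $\lambda$ --- which is exactly what the hypothesis does \emph{not} give: \eqref{preserved-30} is assumed for a single exponent $\lambda_0$ only, so the argument is circular for $\lambda>\lambda_0/2$. Moreover no argument of this shape can work: an $L^2$-eigenfunction of $\Delta+\widetilde V$ with compactly supported $\widetilde V_1$ and eigenvalue $-k^2$ decays like $e^{-k|x|}$ and is nontrivial, so the conclusion genuinely fails for small $\lambda_0$ and the parameter $\tau$ must stay bounded by a fixed $\tau_0(\|\widetilde V\|_\infty,c_1,c_2)$. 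The paper's proof therefore keeps $\tau=\tau_0$ fixed, cuts off both at an inner radius $\rho_0$ and at an outer radius $R$, absorbs the inner error by a quantitative local estimate $\|w\|^2_{L^2(B_{4\rho_0})}\le M\|w\|^2_{L^2(B_{10\rho_0}\setminus B_{5\rho_0})}$ (your remark that the commutator term is ``a priori finite'' is not enough --- it must be dominated by the main term), deduces the lower bound $\|w\|^2_{L^2(B_{2R}\setminus B_R)}\ge c\,e^{-4\tau R}\|w\|^2_{L^2(B_{4\rho_0})}$ uniformly in $R$, and then sums over dyadic $R$ against \eqref{preserved-30} with $\lambda_0>2\tau$ to force $\|w\|_{L^2(B_{4\rho_0})}=0$. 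You would need to restructure your argument along these lines; the reduction to the evolution equation in your first paragraph and the appeal to Aronszajn at the end are then unnecessary.
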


 We observe that Theorem \ref{theorem20} is a stationary result (not a consequence of the time evolution results
 in Theorems \ref{theorem4} and \ref{theorem4b}) in which the ellipticity of the laplacian in \eqref{eigen} 
 plays an essential role. 
 
  The proof of Theorem \ref{theorem20} will be based in the following Carleman estimate :
 
 \begin{theorem}
\label{theorem20a} 

Let $\rho\in (0,1]$ and $\widetilde V$ as in Theorem \ref{theorem20}. Then there exists  $\tau_0=\tau_0(\rho;\|\widetilde V\|_{\infty};c_1;c_2)>0$ such that the inequality
\begin{equation}
\label{carleman}
\tau^{3/2}\,\|\,|x|^{-1/2} \,e^{\tau |x|} g \|_2\leq \| e^{\tau|x|} (\Delta g+\widetilde V g)\|_2
\end{equation}
holds for any $\tau\geq \tau_0$ and any $g\in C^{\infty}_0(\R^n - \overline{B_{\rho}(0)}\,)$.
\end{theorem}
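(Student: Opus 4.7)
The plan is to establish \eqref{carleman} by the standard symmetric/antisymmetric decomposition of the conjugated operator, combined with a positive-commutator argument and an interpolation (AM--GM) step. Setting $f = e^{\tau|x|}g$, a direct computation gives
\[ e^{\tau|x|}(\Delta g + \widetilde V g) = Sf + Af + \widetilde V f, \]
where $S = \Delta + \tau^2$ is formally self-adjoint and $A = -2\tau(\partial_r + \tfrac{n-1}{2r})$ is formally skew-adjoint (with $\partial_r = \widehat{x}\cdot\nabla$ and $r=|x|$). The inequality \eqref{carleman} is then equivalent to $\tau^3 \| |x|^{-1/2} f\|_2^2 \lesssim \|(S+A+\widetilde V)f\|_2^2$ for all $f\in C_0^\infty(\R^n\setminus\overline{B_\rho(0)})$ and $\tau$ sufficiently large.

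Since $|\widetilde V_1 f|\leq c_1|x|^{-1/2}|f|$, the perturbation by $\widetilde V_1$ gives $\|\widetilde V_1 f\|_2 \leq c_1\| |x|^{-1/2}f\|_2$, which is absorbable into the target left-hand side for $\tau\geq\tau_0$ large; hence I may assume $\widetilde V = \widetilde V_2$ is real. Using the (anti-)self-adjointness,
\[ \|(S+\widetilde V_2+A)f\|_2^2 = \|(S+\widetilde V_2)f\|_2^2 + \|Af\|_2^2 + \bigl\langle[S+\widetilde V_2,A]f,f\bigr\rangle. \]
The commutator with $\widetilde V_2$ is multiplication by $2\tau\,\partial_r\widetilde V_2$, so by hypothesis \eqref{20c},
\[ \bigl\langle[\widetilde V_2,A]f,f\bigr\rangle = 2\tau\int \partial_r\widetilde V_2\,|f|^2\,dx \,\geq\, -2\tau c_2\int \frac{|f|^2}{|x|}\,dx, \]
which is absorbable by $\tau^3\int|f|^2/|x|\,dx$ for $\tau$ large.

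The main positive contributions come from $\|(S+\widetilde V_2)f\|_2^2 + \|Af\|_2^2$. I would use $\|Sf\|_2^2 = \|\Delta f\|_2^2 - 2\tau^2\|\nabla f\|_2^2 + \tau^4\|f\|_2^2$ and $\|Af\|_2^2 = 4\tau^2\|(\partial_r+\tfrac{n-1}{2r})f\|_2^2$, then organize the bookkeeping via a spherical-harmonic decomposition $f(r,\omega)=\sum_\ell f_\ell(r)Y_\ell(\omega)$ with the reduction $F_\ell=r^{(n-1)/2}f_\ell$. This reduces matters to the family of one-dimensional Carleman estimates
\[ \tau^3\int_\rho^\infty\frac{|F_\ell|^2}{r}\,dr \,\lesssim\, \int_\rho^\infty\Bigl|F_\ell'' - 2\tau F_\ell' + \tau^2 F_\ell - \frac{c_\ell}{r^2}F_\ell\Bigr|^2 dr \]
with $c_\ell = \ell(\ell+n-2) + \tfrac{(n-1)(n-3)}{4}$. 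Each such one-dimensional estimate follows from the identity $\int|F''-2\tau F' + \tau^2 F|^2 dr = \|F''\|_2^2 + 2\tau^2\|F'\|_2^2 + \tau^4\|F\|_2^2$ (cross terms vanishing by integration by parts on compactly supported $F$), the classical Hardy inequality $\|F'\|_2^2\geq\tfrac14\int|F|^2/r^2\,dr$, and the pointwise AM--GM bound $\tfrac12\tau^2|F|^2/r^2 + \tfrac12\tau^4|F|^2 \geq \tau^3|F|^2/r$. The $-c_\ell/r^2$ correction is treated as a lower-order perturbation bounded by $|c_\ell|/\rho^2$ on the support, hence absorbable by the large $\tau^4$ term.

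The main technical obstacle is the careful bookkeeping of cross terms with mismatched signs---in particular, the $-2\tau^2\|\nabla_\omega f\|_2^2$ contribution inside $\|Sf\|_2^2$ competing with the angular part of $\|Af\|_2^2$ and of $\langle[\Delta,A]f,f\rangle$. The spherical-harmonic reduction decouples these radial and angular pieces so that the one-dimensional AM--GM step cleanly produces the critical weight $|x|^{-1/2}$ with the sharp factor $\tau^{3/2}$. Summing over $\ell$ using orthogonality of the $Y_\ell$ then recovers the full $\R^n$ estimate.
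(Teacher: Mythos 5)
Your overall architecture (conjugate by $e^{\tau|x|}$, split into symmetric and skew-symmetric parts, expand the square, use $\partial_r\widetilde V_2\geq -c_2/|x|$ in the commutator and absorb $\widetilde V_1$ perturbatively via $|\widetilde V_1 f|\leq c_1|x|^{-1/2}|f|$) matches the paper's. The gap is in the one place where you commit to specifics: the treatment of the centrifugal term $-c_\ell r^{-2}F_\ell$ after the spherical-harmonic reduction. You propose to treat it as ``a lower-order perturbation bounded by $|c_\ell|/\rho^2$ on the support, hence absorbable by the large $\tau^4$ term.'' But $c_\ell=\ell(\ell+n-2)+\tfrac{(n-1)(n-3)}{4}\to\infty$, so absorbing $\|c_\ell r^{-2}F_\ell\|_2^2\leq (c_\ell/\rho^2)^2\|F_\ell\|_2^2$ into $\tau^4\|F_\ell\|_2^2$ forces $\tau_0\gtrsim c_\ell/\rho^2$, which depends on $\ell$; since the theorem requires a single threshold $\tau_0(\rho,\|\widetilde V\|_\infty,c_1,c_2)$ valid for all $f$ (hence all $\ell$ simultaneously), this step fails. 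The difficulty is real: inside $\|S f\|_2^2$ the cross term produces $-2\tau^2 c_\ell\|F_\ell/r\|_2^2$ (the radial avatar of $-2\tau^2\||x|^{-1}\nabla_\omega f\|_2^2$), which you yourself flag as ``the main technical obstacle'' but never actually resolve. A correct repair keeps $-c_\ell/r^2$ inside the symmetric part, notes that its commutator with $-2\tau\partial_r$ contributes the \emph{positive} term $4\tau c_\ell\int |F_\ell|^2 r^{-3}\,dr$, and splits into the regions $c_\ell/r^2\leq \tau^2/2$ (where $(\tau^2-c_\ell/r^2)^2\geq\tau^4/4$ and your AM--GM step applies) and $c_\ell/r^2>\tau^2/2$ (where $4\tau c_\ell r^{-3}>2\tau^3 r^{-1}$ already dominates the target). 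Without some such case analysis the estimate is not established uniformly in $\ell$.

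For comparison, the paper bypasses the spherical-harmonic decomposition and all of this bookkeeping. After obtaining (from the squared expansion and the lower bound $\|\mathcal Af\|_2\geq\sqrt{\tau\rho}\,\||x|^{-1/2}\partial_rf\|_2-\sqrt{\tau/\rho}\,\||x|^{-1/2}f\|_2$) control of $\|\mathcal Sf\|_2+\sqrt{\tau\rho}\,\||x|^{-1/2}\nabla f\|_2$, it produces the main term $\tau^3\int|f|^2/|x|\,dx$ directly from the algebraic identity $\tau^2=\mathcal S-\Delta-\widetilde V_2$ tested against $\tau|f|^2/|x|$: integrating by parts turns $-\tau\int|x|^{-1}\Delta f\,\overline f$ into $\tau\int|x|^{-1}|\nabla f|^2+\tfrac{n-3}{2}\tau\int|x|^{-3}|f|^2$, and the remaining $\tau\,\Re\int|x|^{-1}\mathcal Sf\,\overline f$ is handled by Cauchy--Schwarz against the already-controlled $\|\mathcal Sf\|_2$. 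That route is more elementary and delivers the weight $|x|^{-1/2}$ and the power $\tau^{3/2}$ in one stroke, with a threshold $\tau_0$ manifestly independent of any frequency parameter. If you want to retain your decomposition, you must supply the uniform-in-$\ell$ argument sketched above; as written, the proof does not close.
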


 \vskip.1in
 
 We return  to the consequence of our time evolution results. Thus, combining  Theorem \ref{theorem2} and the comments before the statement of Theorem \ref{theorem10} one has that Theorem \ref{theorem10} 
 also applies to the solutions of the non-elliptic eigenvalue problem 
\begin{equation}
\label{eigen2}
\mathcal L_k w + \widetilde V(x) w=\zeta w,
\end{equation}
with $\mathcal L_k$ as in \eqref{opel} with complex potential $\widetilde V$ and $\zeta\in \R$.
 \vskip.1in

We shall employ the above results to study  
the possible profile of the concentration blow up phenomenon in solutions of the initial value problem (IVP) associated to the 
non-linear Schr\"odinger
equation
\begin{equation}
\label{E: NLS2}
\begin{aligned}
\begin{cases}
&i \partial_t u + \Delta u \pm |u|^{a} u=0,\;\;\;\;\;\;\;\;\;x\in\R^n,\;\;t\in \R, \;\;a>0,\\
&u(x,0)=u_0(x).
\end{cases}
\end{aligned}
\end{equation}
We observe that if $u(x,t)$ is a solution of \eqref{E: NLS2} then for all $\sigma>0$ 
\begin{equation}
\label{ab1}
u_{\sigma}(x,t)=\sigma^{2/a}\,u(\sigma x,\sigma^2t),
\end{equation}
is also a solution of \eqref{ab1} with data $u_{\sigma}(x,0)=\sigma^{2/a}\,u_0(x)$, so
\begin{equation}
\label{ab2}
\| D^{s} u_{\sigma}(x,0)\|_2=\sigma^{2/a-n/2+s}\| D^{s} u_0\|_2,
\end{equation}
where $D^s f(x)=(|\xi|^s\widehat f)^{\lor}(x),\;s\in\R.$ Thus, if $s_a/2-2/a$ the size of the data does not change by the scaling and one says that
\begin{equation}
\label{ab3}
\dot H^{n/2-2/a}(\R^n)= D^{n/2-2/a}L^2(\R^n),
\end{equation}
is a critical space for the IVP \eqref{E: NLS2}. The following result  
concerning  the local well-posedness of the IVP \eqref{E: NLS2} in the critical cases was established in \cite{CW2}.

\begin{TA}  Let $s_a/2-2/a,\;\;s_a\geq 0$ with 
 $[s_a]\leq a-1$ if $a$ is not an odd integer, then for each $u_0\in H^{s_a}(\R^n)$
there exist $T=T(u_0)>0$ and a unique solution $u=u(x,t) $ of the IVP \eqref{E: NLS2}
with
\begin{equation}
\label{1}
u\in C([-T,T]:H^{s_a}(\R^n))\cap L^q([-T,T]:L^p_{s_a}(\R^n)) =Z^{s_a}_T.
\end{equation}
Moreover, the map data $\to$ solution is locally continuous from $H^{s_a}(\R^n)$ into  $Z^{s_a}_T$.
\end{TA}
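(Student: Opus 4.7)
The plan is to construct the solution by a Banach fixed point argument applied to the Duhamel integral equation
\begin{equation*}
\Phi(u)(t)=e^{it\Delta}u_0\pm i\int_0^t e^{i(t-t')\Delta}\bigl(|u|^a u\bigr)(t')\,dt',
\end{equation*}
inside the critical space $Z^{s_a}_T$. The analytic backbone consists of the Strichartz estimates for the free Schr\"odinger group $e^{it\Delta}$, Sobolev embedding adapted to the critical exponent $s_a=n/2-2/a$, and the Kenig--Ponce--Vega fractional Leibniz and chain rules for $D^{s_a}$; the hypothesis $[s_a]\leq a-1$ is precisely what makes the chain rule applicable to the non-smooth nonlinearity $|u|^au$.

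First I would fix a Schr\"odinger-admissible pair $(q,p)$ (with $2/q+n/p=n/2$) tailored to the critical scaling \eqref{ab1}--\eqref{ab2}, so that $L^p_{s_a}$ is invariant under $u\mapsto u_\sigma$. Then the homogeneous Strichartz estimate $\|e^{it\Delta}u_0\|_{L^q_TL^p_{s_a}}\lesssim\|u_0\|_{H^{s_a}}$, combined with the inhomogeneous Strichartz inequality (via the Christ--Kiselev lemma when $q>q'$), controls both the $L^\infty_TH^{s_a}$ and $L^q_TL^p_{s_a}$ components of $\|\Phi(u)\|_{Z^{s_a}_T}$ by $\|u_0\|_{H^{s_a}}+\|D^{s_a}(|u|^au)\|_{L^{q'}_TL^{p'}_x}$.

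The central nonlinear estimate is
\begin{equation*}
\bigl\|D^{s_a}(|u|^au)\bigr\|_{L^{q'}_TL^{p'}_x}\lesssim \|u\|_{L^q_TL^p_{s_a}}^{a+1},
\end{equation*}
obtained by applying a fractional Leibniz rule to distribute $D^{s_a}$ onto one factor and the fractional chain rule on $z\mapsto|z|^az$ for the resulting term; the remaining $a$ copies of $u$ are placed in a Strichartz Lebesgue space via Sobolev embedding from $L^p_{s_a}$, and the H\"older bookkeeping on spatial and temporal exponents closes by scaling. The assumption $[s_a]\leq a-1$ (for $a$ not an odd integer) is exactly what guarantees $|z|^az$ has enough classical regularity for the chain rule at the required order. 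Together with the analogous difference estimate $\|D^{s_a}(|u|^au-|v|^av)\|_{L^{q'}_TL^{p'}_x}\lesssim(\|u\|_{L^q_TL^p_{s_a}}+\|v\|_{L^q_TL^p_{s_a}})^a\|u-v\|_{L^q_TL^p_{s_a}}$, the Strichartz bounds yield
\begin{equation*}
\|\Phi(u)\|_{Z^{s_a}_T}\leq C\|u_0\|_{H^{s_a}}+C\|u\|_{Z^{s_a}_T}^{a+1}
\end{equation*}
and a corresponding Lipschitz bound on $\Phi(u)-\Phi(v)$ in $Z^{s_a}_T$.

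Because $s_a$ is the scaling-critical regularity, the main obstacle is that one cannot close the contraction by choosing $T$ small purely in terms of $\|u_0\|_{H^{s_a}}$, since the nonlinear estimate carries no $T$-gain. The resolution is to use that for every fixed $u_0\in H^{s_a}$ one has $\eta(T):=\|e^{it\Delta}u_0\|_{L^q([0,T];L^p_{s_a})}\to 0$ as $T\to 0^+$ by dominated convergence in the Strichartz space, and then to choose $T=T(u_0)$ so that $\eta(T)$ lies below the threshold dictated by the nonlinear constants. On the resulting ball of radius comparable to $\eta(T)$ in the $L^q_TL^p_{s_a}$ norm, $\Phi$ is a contraction, producing the unique fixed point $u\in Z^{s_a}_T$; continuous dependence on the data follows by applying the difference estimate to two solutions, using the same $\eta$-mechanism to secure a common time of existence on a small $H^{s_a}$-neighborhood of $u_0$.
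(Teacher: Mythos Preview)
The paper does not prove this statement at all: it is quoted verbatim as a result of Cazenave--Weissler \cite{CW2} and used as a black box, so there is no ``paper's own proof'' to compare against. Your sketch is the standard Strichartz contraction argument that underlies \cite{CW2}, and the overall architecture---Duhamel formulation, admissible pair adapted to the scaling, fractional Leibniz/chain rule under the hypothesis $[s_a]\le a-1$, and the critical-case trick of choosing $T=T(u_0)$ via $\|e^{it\Delta}u_0\|_{L^q([0,T];L^p_{s_a})}\to 0$---is correct.

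One technical wrinkle worth flagging: when $a$ is not an odd integer and $s_a>0$, the difference estimate you wrote,
\[
\bigl\|D^{s_a}(|u|^au-|v|^av)\bigr\|_{L^{q'}_TL^{p'}_x}\lesssim\bigl(\|u\|_{L^q_TL^p_{s_a}}+\|v\|_{L^q_TL^p_{s_a}}\bigr)^a\|u-v\|_{L^q_TL^p_{s_a}},
\]
is generally \emph{not} available at the full $D^{s_a}$ level, because the map $z\mapsto |z|^az$ loses one order of differentiability upon taking a difference. The usual fix (and what is done in the Cazenave--Weissler framework) is to run the contraction in a weaker metric---say $L^q_TL^p_x$ without the $s_a$ derivatives---while proving that $\Phi$ maps a ball in the stronger $Z^{s_a}_T$ norm into itself; uniqueness and continuous dependence are then recovered in the weaker topology and upgraded. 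Your outline becomes fully rigorous with that adjustment.
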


Above we have introduced the  notations :

 (a) for  $1<p<\infty$ and $s\in \R$
  \begin{equation}
 \label{ps}
 L^p_s(\R^n)\equiv (1-\Delta)^{-s/2} L^p(\R^n),\;\;\;\;\;\|\cdot\|_{s,p}\equiv \|(1-\Delta)^{s/2}\cdot\|_p,
 \end{equation}
 with $L^2_s(\R^n)=H^s(\R^n)$,

 (b) the indices $(q,p)$ in \eqref{1} are given by the Strichartz estimate \cite{Str2}, \cite{GiVe} :
 \begin{equation}
 \label{str}
( \int_{-\infty}^{\infty} \|e^{it\Delta}u_0\|_p^qdt)^{1/q}\leq c\|u_0\|_2,
 \end{equation}
 where
 $$
 \frac{n}{2}=\frac{2}{q}+\frac{n}{p},\;\;\;\;\;2\leq p\leq \infty,\;\;\text{if}\;\;\, n=1,\;\;2
 \leq p<2n/(n-2),\;\;\;\text{if}\;\;\,n\geq 2.
 $$

\vskip.1in

The pseudo-conformal transformation deduced in \cite{GiVe} shows  that if $u=u(x,t)$ is a solution of \eqref{E: NLS2}, then 
\begin{equation}
\label {otrasol}
v(x,t) = \frac{e^{i \omega |x|^2/4(\nu+\omega t)}}{(\nu+\omega t)^{n/2}}\,u\left(\frac{x}{\nu+\omega t},\frac{\gamma +\theta t}{\nu+\omega t}\right),  
\;\;\;\;\;\;\nu\theta-\omega\gamma=1,
\end{equation}
satisfies the equation
\begin{equation}
\label{E: NLS3}
i \partial_t v + \Delta v \pm \,(\nu+\omega t)^{an/2-2}|v|^{a} v=0.
\end{equation}

Hence, in the $L^2$-critical case $a=4/n$ the equations \eqref{E: NLS2} and \eqref{E: NLS3} are the same. Also in this case $a=4/n$ the pseudo-conformal transformation preserves both the space $L^2(\Rn)$ and the space $H^1(\Rn)\cap L^2(\Rn:|x|^2dx)$.
In particular, if we take $u(x,t) =e^{it}\,\varphi(x)$ the standing wave solution, i.e. $\varphi(x)$ being the unique
positive solution (ground state)  of the non-linear elliptic equation 
\begin{equation}
\label{elliptic}
-\varphi+\Delta \varphi + |\varphi|^{4/n}\varphi=0,\;\;\;\;x\in\Rn,
\end{equation}
it follows that
\begin{equation}
\label{bu}
v(x,t)= \frac{e^{it/(1-t)}\,e^{-i|x|^2/4(1-t)}}{(1-t)^{n/2}}\,\varphi\left(\frac{x}{1-t}\right),
\end{equation}
is a solution of \eqref{E: NLS2} with $a=4/n$  and $+$ sign in the nonlinear term (focussing case) which blows up at time $t=1$, i.e.
$$
\lim_{t\uparrow 1}\| \nabla\,v(\cdot,t)\|_2=\infty,
$$
and
$$
\lim_{t\uparrow  1} |v(\cdot,t)|^2=  c\, \delta(\cdot), \;\;\;\text{in the distribution sense}. 
$$

 Since it is known that positive solutions of the elliptic problem \eqref{elliptic} (in particular the  ground state)  have exponential decay (see \cite{St}, \cite{BeLi}), i.e.
$$
\varphi(x)\sim b_1 e^{-b_2|x|},\;\;\;\;\;\;\;\;\;\;b_1,\,b_2>0,
$$
the blow up solution $v(x,t)$ in \eqref{bu} satisfies 
\begin{equation}
\label{bound1}
|v(x,t)|\leq \frac{1}{(1-t)^{n/2}}\,Q \left( \frac{x}{1-t}\right),\;\;\;\;\;\;\;\;\;t\in (-1,1),
\end{equation}
with $\;Q(x)=b_1\,e^{-b_2|x|}$. One may ask if it is possible to have a faster  \lq\lq concentration profile"  in a solution of \eqref{E: NLS2}
with $a=4/n$ than the one  described in \eqref{bound1}. In other words,  whether or not \eqref{bound1} can hold  with
\begin {equation}
\label {bound2}
Q(x)=b_1\,e^{-b_2|x|^p},\;\;\;\;\;\;\;b_1,\,b_2>0,\;\;\;p>1,
\end{equation}
or
\begin {equation}
\label {bound3}
Q(x)=b_1\,e^{-b_3|x|},
\end{equation}
with $b_3$ sufficiently large. More generally for $a\geq 4/n$ one may ask if  a blow up solution 
$v(x,t)$ of \eqref{E: NLS2} can satisfy
\begin{equation}
\label{bound4}
|v(x,t)|\leq \frac{1}{(1-t)^{2/a}}\,Q\left( \frac{x}{1-t}\right),\;\;\;\;\;\;\;\;\;t\in (-1,1),
\end{equation}
with $\;Q(\cdot)$ as in \eqref{bound2} or as \eqref{bound3}.
Our next result shows that this is not the case.

\begin{theorem}
\label{theorem5}

Let $\,a\geq 4/n$. Let $v\in C((-1,1): H^{n/2-2/a}(\Rn))$ be a solution of the equation \eqref{E: NLS2}.
If  \eqref{bound4} holds with $\;Q(\cdot)$ as in \eqref{bound2} for some $p>1$ and $b_2>0$ or as \eqref{bound3}, then $v\equiv 0$.

\end{theorem}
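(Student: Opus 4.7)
The plan is to apply a pseudo-conformal-type change of variables to convert $v$—defined only on $(-1,1)$ and concentrating at $t=1$—into a globally defined solution of a linear Schr\"odinger equation on $\R^{n}\times[1,\infty)$, and then invoke Theorem \ref{theorem4} or Theorem \ref{theorem4b}. Inverting \eqref{otrasol} with parameters $(\nu,\omega,\theta,\gamma)=(1,-1,0,1)$, set
\[
u(y,s) := s^{-n/2}e^{i|y|^{2}/(4s)}\,v\bigl(y/s,\;1-1/s\bigr),\qquad s\geq 1,
\]
corresponding to $(x,t)=(y/s,\,1-1/s)$. A chain-rule computation entirely parallel to the one producing \eqref{E: NLS3} from \eqref{E: NLS2} shows that $u$ solves
\[
i\partial_{s}u+\Delta u\pm s^{na/2-2}|u|^{a}u=0\quad\text{on}\quad \R^{n}\times[1,\infty),
\]
which is a linear Schr\"odinger equation \eqref{e1} with real, time-dependent potential $V(y,s):=\pm s^{na/2-2}|u(y,s)|^{a}$.

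The concentration bound \eqref{bound4} translates into $|u(y,s)|\leq s^{-n/2}\cdot s^{2/a}Q(y)=s^{-(n/2-2/a)}Q(y)$; since $a\geq 4/n$ forces $n/2-2/a\geq 0$, one gets $|u(y,s)|\leq Q(y)$ uniformly in $s\geq 1$, and the powers of $s$ in $V$ cancel exactly to give $|V(y,s)|\leq Q(y)^{a}$. Thus $V\in L^{\infty}(\R^{n}\times[1,\infty))$ with $|V|\leq c_{\alpha}/\langle y\rangle^{\alpha}$ for every $\alpha\geq 0$ (by the super-polynomial decay of $Q$), and the weighted norm satisfies $\sup_{s\geq 1}\int e^{\lambda|y|^{r}}|u(y,s)|^{2}\,dy\leq\int e^{\lambda|y|^{r}}Q(y)^{2}\,dy$, for any $\lambda,r>0$ for which the right-hand side is finite.

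In the case $Q(y)=b_{1}e^{-b_{2}|y|^{p}}$ with $p>1$, choose $\alpha\in[0,1/2)$ close enough to $1/2$ that $r:=(4-2\alpha)/3<p$; then $\int e^{\lambda|y|^{r}}Q(y)^{2}\,dy<\infty$ for every $\lambda>0$, and Theorem \ref{theorem4} (with $V_{2}\equiv 0$) forces $u\equiv 0$. In the case $Q(y)=b_{1}e^{-b_{3}|y|}$, fix any $\epsilon_{0}>0$ and let $\lambda_{0}$ be the constant produced by Theorem \ref{theorem4b} (which depends only on $\|V\|_{\infty}\leq b_{1}^{a}$, $c_{1}$, and $\epsilon_{0}$); the assumption that $b_{3}$ is sufficiently large is precisely $b_{3}>\lambda_{0}/2$, which makes $\int e^{\lambda_{0}|y|}Q(y)^{2}\,dy<\infty$ and again yields $u\equiv 0$. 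Either way, inverting the transformation gives $v(\cdot,0)=0$, and uniqueness for the Cauchy problem for \eqref{E: NLS2} in $H^{n/2-2/a}$ delivers $v\equiv 0$ on $(-1,1)$. The main nontrivial point is the exact cancellation of $s$-powers in $V$: the growth $s^{na/2-2}$ produced by the pseudo-conformal transformation is killed precisely by the contraction $s^{-a(n/2-2/a)}$ of $|u|^{a}$ inherited from \eqref{bound4}, and this balance is the true content of the hypothesis $a\geq 4/n$. Once this cancellation is observed, the rest is parameter bookkeeping to match $\alpha$ to $p$ via $p=(4-2\alpha)/3$ in Theorem \ref{theorem4} (or $b_{3}$ to $\lambda_{0}$ in Theorem \ref{theorem4b}).
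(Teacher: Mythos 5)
Your proposal is correct and follows essentially the same route as the paper: the paper applies the pseudo-conformal transformation \eqref{otrasol} with $\nu=\omega=\theta=1$, $\gamma=0$ to produce $w(x,t)=(1+t)^{-n/2}e^{i|x|^2/4(1+t)}v\bigl(x/(1+t),t/(1+t)\bigr)$ on $[0,\infty)$, observes the same exact cancellation $|V|\leq(1+t)^{an/2-2}(1+t)^{-a(n/2-2/a)}Q^a=Q^a$, and then invokes Theorems \ref{theorem4} and \ref{theorem4b} with $V_2\equiv 0$, which is your argument up to the trivial reparametrization $s=1+t$.
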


 In \cite{ekpv10} we establish the result in Theorem \ref{theorem5} for $a=4/n$ and $p>4/3$.

Now we  consider the equation   in \eqref{E: NLS2} with the operator describing the dispersive relation $\mathcal L_k$ as in \eqref{e1b} being non-degenerate
but not  elliptic
\begin{equation}
\label{E: NLS2b}
i \partial_t u + \mathcal L_k u \pm |u|^{a} u=0,\;\;\;\;\;\;\;\;\;\;\;\;a>0.
\end{equation} 
In this case, the local well-posedness theory  is similar to that described above
for the IVP  \eqref{E: NLS2}. This follows from the fact that the local theory  is based on the Strichartz estimates in \eqref{str} which do not require the ellipticity of the laplacian, i.e. \eqref{str} holds with $\mathcal L_k$ instead of $\Delta$. Hence the results in \cite{CW2} still holds for the IVP associated
to the equation in 
 \eqref{E: NLS2b}. In addition, in this case the pseudo-conformal transformation tells us that if $u=u(x,t)$ is a solution of \eqref{E: NLS2b}, then 
\begin{equation}
\label {otrasolb}
v(x,t) = \frac{e^{i \omega \mathcal Q_k(x)/4(\nu+\omega t)}}{(\nu+\omega t)^{n/2}}\,u\left(\frac{x}{\nu+\omega t},\frac{\gamma +\theta t}{\nu+\omega t}\right),  
\;\;\;\;\;\;\nu\theta-\omega\gamma=1,
\end{equation}
with
\begin{equation}
\label{quadform}
\mathcal Q_k(x)=x_1^2+..+x_k^2-x_{k+1}^2-..-x_n^2,
\end{equation}
verifies the equation
\begin{equation}
\label{E: NLS3b}
i \partial_t v + \mathcal L_k v \pm \,(\nu+\omega t)^{an/2-2}|v|^{a} v=0.
\end{equation}

Hence, as in Theorem \ref{theorem5} we have:

\begin{theorem}
\label{theorem6}

Let $\,a\geq 4/n$. Let $v\in C((-1,1): H^{n/2-2/a}(\Rn))$ be a solution of the equation in \eqref{E: NLS2}.
If $u$ satisfies  \eqref{bound4} with $\;Q(\cdot)$ as in \eqref{bound2} or as \eqref{bound3}, then $u\equiv 0$.

\end{theorem}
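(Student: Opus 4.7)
The plan is to adapt the proof of Theorem \ref{theorem5} to the ultra-hyperbolic setting by replacing the elliptic linear unique continuation inputs (Theorems \ref{theorem4} and \ref{theorem4b}) with their $\mathcal L_k$-analogues recorded in Theorem \ref{theorem2}. The algebra of the pseudo-conformal identity \eqref{otrasolb}--\eqref{E: NLS3b} is purely formal once $|x|^2$ is replaced by $\mathcal Q_k(x)$, so every scaling consideration used in Theorem \ref{theorem5} transfers without change.

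First I would restrict $v$ to the forward blow-up interval $[0,1)$ and apply the pseudo-conformal transformation with parameters $(\nu,\omega,\gamma,\theta)=(1,1,0,1)$, which satisfies $\nu\theta-\omega\gamma=1$ and whose time map $t=s/(1+s)$ sends $s\in[0,\infty)$ to $t\in[0,1)$:
\[
u(y,s)=(1+s)^{-n/2}\,e^{i\mathcal Q_k(y)/(4(1+s))}\,v\!\left(\tfrac{y}{1+s},\tfrac{s}{1+s}\right).
\]
By \eqref{otrasolb}--\eqref{E: NLS3b}, $u\in C([0,\infty):L^2(\Rn))$ satisfies
\[
i\partial_s u+\mathcal L_k u\pm(1+s)^{an/2-2}|u|^{a}u=0,
\]
and since the transformation is an $L^2$-isometry, $\|u(\cdot,s)\|_2=\|v(\cdot,s/(1+s))\|_2$ is conserved and strictly positive as soon as $v\not\equiv 0$. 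The substitution $x=y/(1+s)$, $1-t=(1+s)^{-1}$ converts the blow-up bound \eqref{bound4} into
\[
|u(y,s)|\le (1+s)^{2/a-n/2}\,Q(y),
\]
and the hypothesis $a\ge 4/n$ forces $2/a-n/2\le 0$, so $|u(y,s)|\le Q(y)$ uniformly in $s\ge 0$.

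Viewing the equation for $u$ as a linear Schrödinger evolution for $\mathcal L_k$ with real potential $V(y,s)=\pm(1+s)^{an/2-2}|u(y,s)|^{a}$, the temporal growth factor cancels exactly with the $a$-th power of the temporal decay factor:
\[
|V(y,s)|\le (1+s)^{an/2-2}\bigl[(1+s)^{2/a-n/2}Q(y)\bigr]^{a}=Q(y)^{a}.
\]
Hence $V\in L^\infty(\Rn\times[0,\infty))$ with exponential spatial decay, so the splitting $V_1:=V$, $V_2\equiv 0$ satisfies \eqref{potcon2bc} or \eqref{potcon2bcd} with constants to spare. When $Q(y)=b_1 e^{-b_2|y|^p}$ with $p>1$, $\int e^{\lambda_0|y|}|u(y,s)|^2\,dy$ is finite for every $\lambda_0>0$; when $Q(y)=b_1 e^{-b_3|y|}$, one takes $b_3$ greater than half the threshold $\lambda_0$ produced by Theorem \ref{theorem4b}. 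In either case the hypothesis \eqref{preserved-2bbb} holds, and the non-elliptic version of Theorem \ref{theorem4b} furnished by Theorem \ref{theorem2} yields $u\equiv 0$, hence $v\equiv 0$ on $[0,1)$. A symmetric argument on $(-1,0]$, or uniqueness of the Cauchy problem, gives $v\equiv 0$ on $(-1,1)$.

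The delicate point, and what explains the requirement $a\ge 4/n$, is the exact exponent cancellation between the factor $(1+s)^{an/2-2}$ produced by \eqref{E: NLS3b} and the factor $(1+s)^{2/a-n/2}$ produced by the scaling profile in \eqref{bound4}; below $L^2$-critical this balance fails, $V$ grows in $s$, and Theorem \ref{theorem2} no longer applies. The non-ellipticity of $\mathcal L_k$ introduces no additional obstacle, since Theorem \ref{theorem2} has been set up precisely for $\mathcal L_k$ and the identity \eqref{otrasolb}--\eqref{E: NLS3b} is stated in exactly the form needed, with $\mathcal Q_k$ in the phase and $\mathcal L_k$ in the equation.
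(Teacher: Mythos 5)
Your proof is correct and follows exactly the route the paper intends for Theorem \ref{theorem6}: the pseudo-conformal change of variables \eqref{otrasolb}--\eqref{E: NLS3b} with $\mathcal Q_k$ in the phase, the uniform bound $|u(y,s)|\le Q(y)$ from $a\ge 4/n$, the exact cancellation giving $|V|\le Q^a$, and then Theorem \ref{theorem2} in place of Theorems \ref{theorem4}--\ref{theorem4b}, which is precisely the argument of Section 4 transplanted to $\mathcal L_k$. One small slip in your closing commentary: the cancellation $(1+s)^{an/2-2}\bigl[(1+s)^{2/a-n/2}\bigr]^{a}=1$ is exact for \emph{every} $a>0$, so below $L^2$-critical the potential does not grow; what fails for $a<4/n$ is the uniform-in-time pointwise bound on $u$ itself, hence the weighted $L^2$ hypothesis \eqref{preserved-2bb}/\eqref{preserved-2bbb} --- which is in fact where you (correctly) used $a\ge 4/n$ in the body of the argument.
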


It should be remarked that the result in Theorem \ref{theorem6}  is a conditional one. It assumes that the local solution 
of the IVP associated to the equation  \eqref{E: NLS2} blows up (see  \eqref{bound4}) which is a open problem.

We will adapt  our results in Theorems \ref{theorem4} and \ref{theorem4b} to study the possible profile of  \lq\lq generalized traveling wave" solutions 
of a class of equations containing  those in \eqref{E: NLS2} and \eqref{E: NLS2b}, (see \eqref{f1} and  \eqref{f2} below).  
Roughly, these are solutions $u(x,t)$ for which there exist  $\mu\in R$ and $\vec e\in \mathbb S^{n-1}$ such that the
$L^2(\R^n)$-norm of $u(x-\mu \,t \,\vec e, t)$ remains highly concentrated  at the origin for all time $t\geq 0$, see \eqref{11bc} and \eqref{b11bc}
 below.

\begin{corollary}
\label{corollary11}

Let $u\in C([0,\infty) :L^2(\Rn))$ be a solution of the equation \eqref{e1} or the equation \eqref{e1b}
with a real potential $V\in L^{\infty}(\R^{n}\times [0,\infty))$.

(a) If there exist  $\mu\in\R$ and $\vec e\in \mathbb S^{n-1}$ such that 
\begin{equation}
\label{potcon11a}
|V(x,t)|\leq  \frac{c_1}{(1+|x+ \mu\,t\,\vec e\, |^2)^{\alpha/2}},
\end{equation}
for some constants $c_1>0$ and  $\,\alpha\in [0,1/2)$. 
Then there exists $\lambda_0(\|V\|_{L^{\infty}(\R^{n}\times [0,\infty))}; \,c_1;\,\alpha)>0$ such that if 
\begin{equation}
\label{11bc}
\sup_{t\geq 0}\;\int_{\R^n}\,e^{\lambda_0\,|x+ \mu\,t\,\vec e\,|^{p}} \,|u(x,t)|^2\,dx < \infty,\;\;\;\;\;\;\text{with}\;\;\;\;\;\;\;p=(4-2\alpha)/3,
\end{equation}
then
\begin{equation}
\label{conclu11}
u\equiv 0.
\end{equation}

(b) If there exist  $\mu\in\R$ and $\vec e\in \mathbb S^{n-1}$ such that 
\begin{equation}
\label{bpotcon11a}
|V(x,t)|\leq  \frac{c_1}{(1+|x+ \mu\,t\,\vec e\,|^2)^{1/4+\epsilon_0/2}},\;\;\;\;\;\;\epsilon_0>0,
\end{equation}
for some constants $c_1>0$.
Then there exists $\lambda_0(\|V\|_{L^{\infty}(\R^{n}\times [0,\infty))}; \,c_1;\,\alpha)>0$ such that if 
\begin{equation}
\label{b11bc}
\sup_{t\geq 0}\;\int_{\R^n}\,e^{\lambda_0\,|x+ \mu\,t\,\vec e\,|}
 \,|u(x,t)|^2\,dx < \infty,
\end{equation}
then
\begin{equation}
\label{bconclu11}
u\equiv 0.
\end{equation}

\end{corollary}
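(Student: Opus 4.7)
The plan is to reduce Corollary \ref{corollary11} to Theorem \ref{theorem4} in part (a) and to Theorem \ref{theorem4b} in part (b) by a Galilean-type change of variables that passes to the frame co-traveling with the center of the potential.

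First I would perform the space translation $y = x + \mu t\,\vec{e}$ and set $W(y,t) = u(y - \mu t\,\vec{e}, t)$. A direct differentiation gives
\begin{equation*}
\partial_t W + \mu\,\vec{e}\cdot\nabla W \;=\; i\bigl(\Delta W + V(y - \mu t\,\vec{e}, t)\,W\bigr),
\end{equation*}
in which the drift $\mu\,\vec e\cdot\nabla W$ has been created by the change of frame. To remove it I would set $\tilde W(y,t) = e^{-i(a\cdot y + b\,t)}\,W(y,t)$ and compute. The substitution produces, besides $\partial_t\tilde W$ and $i\Delta\tilde W$, a residual gradient term $(-\mu\,\vec e - 2a)\cdot\nabla\tilde W$ and a residual zero-order coefficient $-i(b + \mu\,\vec e\cdot a + |a|^2)$. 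The choice $a = -\mu\,\vec e/2$, $b = \mu^2/4$ kills both, so
\begin{equation*}
\partial_t\tilde W \;=\; i\bigl(\Delta\tilde W + \tilde V(y,t)\,\tilde W\bigr),\qquad \tilde V(y,t) := V(y - \mu t\,\vec{e}, t).
\end{equation*}

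Hypothesis \eqref{potcon11a} immediately gives $|\tilde V(y,t)| \leq c_1/(1+|y|^2)^{\alpha/2}$, which is \eqref{potcon2bc} with $V_2 \equiv 0$; in part (b) the same bookkeeping yields \eqref{potcon2bcd}. Because the phase factor is unimodular and the translation is measure preserving,
\begin{equation*}
\int_{\R^n} e^{\lambda_0|y|^p}\,|\tilde W(y,t)|^2\,dy \;=\; \int_{\R^n} e^{\lambda_0|x+\mu t\,\vec e|^p}\,|u(x,t)|^2\,dx,
\end{equation*}
so \eqref{11bc} becomes \eqref{preserved-2bb} for $\tilde W$ (and \eqref{b11bc} becomes \eqref{preserved-2bbb}). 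Theorem \ref{theorem4} (resp.\ Theorem \ref{theorem4b}) then forces $\tilde W \equiv 0$, hence $u \equiv 0$.

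For equation \eqref{e1b} the same strategy applies, with the phase adapted to the indefinite quadratic form $\mathcal Q_k$. Writing $\epsilon_j = +1$ for $j \leq k$ and $\epsilon_j = -1$ for $j > k$, one has $\mathcal L_k(e^{i\phi}\tilde W) = e^{i\phi}\bigl(\mathcal L_k\tilde W - \mathcal Q_k(a)\tilde W + 2i\sum_j \epsilon_j a_j\,\partial_j\tilde W\bigr)$; this forces $a_j = -\mu\epsilon_j e_j/2$ to cancel the drift $\mu\,\vec e\cdot\nabla\tilde W$ and then $b = \mu^2\mathcal Q_k(\vec e)/4$ to cancel the residual scalar. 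The conclusion then follows from Theorem \ref{theorem2} applied to the resulting $\tilde W$. I do not expect any serious obstacle in the argument---it is a linear change of unknown---the only point worth noting is that the diagonal system determining the Galilean phase is solvable in the ultrahyperbolic case precisely because $\mathcal L_k$ has constant coefficients and the matrix $\mathrm{diag}(\epsilon_j)$ is invertible.
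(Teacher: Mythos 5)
Your proposal is correct and is essentially the paper's own argument: both pass to the co-moving frame via $u(x-\mu t\,\vec e,t)$ multiplied by the Galilean phase $e^{i(\frac{\mu}{2}x\cdot\vec e-\frac{\mu^2 t}{4})}$ (with $\vec e$ replaced by $\vec e(k)$ and $\mu^2$ by $\mu^2\mathcal Q_k(\vec e)$ in the ultrahyperbolic case), and then invoke Theorems \ref{theorem4}, \ref{theorem4b}, or \ref{theorem2}. The only difference is that you derive the phase coefficients $a=-\mu\vec e/2$, $b=\mu^2/4$ from the cancellation conditions, whereas the paper simply writes down the transformed solution.
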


 As in Corollary \ref{corollary13} we remark that if \eqref{potcon11a} holds with $\alpha=1/2$ and \eqref{11bc}
 holds for some $p>1$ and $\lambda_0>0$, then $ u\equiv 0$.  
  
 Finally we shall consider the semi-linear equations of the form 
 \begin{equation}
 \label{f1}
  \partial_t u=i(\Delta u+ F(u,\overline u) u),
  \end{equation}
 and
  \begin{equation}
 \label{f2}
  \partial_t u=i(\mathcal L_k u+ F(u,\overline u) u),
  \end{equation}
 with $\mathcal L_k$ as in \eqref{opel} and $F \,:\C^2\to\R$ (real valued), $F(0,0)=0$, and  such that there exists $M>0$ and $j\in \Z^+$ such that
  \begin{equation}
 \label{f3}
  |F(z,\overline z)|\leq M(|z|+|z|^j).
  \end{equation}

As a direct consequence of Theorems \ref{theorem4} and \ref{theorem4b}, Corollary \ref{corollary11}, and an appropriate version of the Galilean invariant property for solution of the equations \eqref{f1} and \eqref{f2} we shall establish the following result:
\begin{corollary}
\label{corollary12}

Let $u\in C([0,\infty):L^2(\Rn))$ be a solution of the equation \eqref{f1} or the equation \eqref{f2}.
 If there exist $\mu\in \R$ and $\vec e\in \mathbb S^{n-1}$ such that 
 \begin{equation}
 \label{ine}
 |u(x,t)|\leq Q(x+ \mu\,t\,\vec e),\;\;\;\;\;\;\;\forall \,x\in\R^n,\;\;t>0,
 \end{equation}
with $\;Q(\cdot)$ as in \eqref{bound2} for some $p>1$ or as in \eqref{bound3},  then $u\equiv 0$.

\end{corollary}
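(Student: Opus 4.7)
The plan is to regard $u$ as a solution of the linear equation \eqref{e1} (or \eqref{e1b} in the ultra-hyperbolic case) with the induced time-dependent potential
$$V(x,t):=F\bigl(u(x,t),\overline{u(x,t)}\bigr).$$
Since $F:\mathbb C^2\to\R$ and $|u(x,t)|\le Q(x+\mu t\vec e)\le b_1$, the potential $V$ is real-valued, uniformly bounded by $M(b_1+b_1^j)$, and satisfies
$$|V(x,t)|\le M\bigl(Q(x+\mu t\vec e)+Q(x+\mu t\vec e)^j\bigr)$$
by \eqref{ine} and \eqref{f3}. In particular $V$ decays faster than any negative power of $\langle x+\mu t\vec e\rangle$, so the decay hypothesis \eqref{potcon11a} of Corollary~\ref{corollary11}(a) (with any $\alpha\in[0,1/2)$) and that of \eqref{bpotcon11a} in part (b) (with any $\epsilon_0>0$) are automatic, with $V_2\equiv 0$.

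It therefore suffices to verify the weighted $L^2$-hypothesis in each of the two possible profiles for $Q$. For $Q$ as in \eqref{bound2} with $p>1$, I pick $\alpha\in[0,1/2)$ sufficiently close to $1/2$ so that $q:=(4-2\alpha)/3$ satisfies $1<q<p$; this is possible because $q$ sweeps the interval $(1,4/3]$ as $\alpha$ varies over $[0,1/2)$. Then, uniformly in $t\ge 0$,
$$\int_{\R^n}e^{\lambda_0|x+\mu t\vec e|^{q}}|u(x,t)|^2\,dx\le b_1^2\int_{\R^n}e^{\lambda_0|y|^q-2b_2|y|^p}\,dy<\infty$$
for every $\lambda_0>0$, so hypothesis \eqref{11bc} holds and Corollary~\ref{corollary11}(a) yields $u\equiv 0$. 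For $Q$ as in \eqref{bound3}, fix any $\epsilon_0>0$ and let $\lambda_0=\lambda_0(\|V\|_\infty,c_1,\epsilon_0)$ be the threshold supplied by Corollary~\ref{corollary11}(b). Since $\|V\|_\infty\le M(b_1+b_1^j)$ is independent of $b_3$, $\lambda_0$ is likewise independent of $b_3$, and ``$b_3$ sufficiently large'' can be read quantitatively as $b_3>\lambda_0/2$; then $\int_{\R^n}e^{(\lambda_0-2b_3)|y|}\,dy<\infty$ and Corollary~\ref{corollary11}(b) again gives $u\equiv 0$.

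The \emph{Galilean invariant property} mentioned in the statement is what underlies Corollary~\ref{corollary11}: the shift $\tilde u(y,t):=e^{i\phi(y,t)}u(y-\mu t\vec e,t)$, with phase $\phi(y,t)=\tfrac12\mu\vec e\cdot y-\tfrac14|\mu|^2t$ in the elliptic case (and its $\mathcal Q_k$-analogue in the ultra-hyperbolic case), solves the linear equation with potential $\tilde V(y,t):=V(y-\mu t\vec e,t)$ and satisfies $|\tilde u(y,t)|=|u(y-\mu t\vec e,t)|\le Q(y)$. The moving profile thus becomes stationary at the origin, and Theorems~\ref{theorem4}--\ref{theorem4b} apply to $\tilde u$ directly; this is the alternative route to the same conclusion. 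The one arithmetical point that needs watching, already isolated in case \eqref{bound3}, is that the threshold $\lambda_0$ produced by the linear theorems must be controlled independently of the decay rate $b_3$, which holds because $\|V\|_\infty$ depends only on $b_1$.
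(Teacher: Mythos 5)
Your proposal is correct and takes essentially the same route as the paper: both reduce the nonlinear problem to the linear unique continuation theorems via the Galilean change of frame, the paper by carrying out the transformation \eqref{7r}--\eqref{7u} explicitly and then citing Corollary \ref{corollary13} and Theorem \ref{theorem4b}, you by citing Corollary \ref{corollary11} (whose proof is exactly that transformation) with the induced potential $V=F(u,\overline u)$. The only differences are cosmetic: for the profile \eqref{bound2} you choose $\alpha<1/2$ with $(4-2\alpha)/3<p$ rather than the $\alpha=1/2$ endpoint, and your remark that the threshold $\lambda_0$ is independent of $b_3$ makes explicit a point the paper leaves implicit.
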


In \cite{GhSa} it was proved that the equation  \eqref{f2} with a $\mathcal L_k$ non-elliptic  operator  does not have  nontrivial (travelling wave) 
 solutions of the form
$$
u(x,t)=e^{i\omega t}\,\varphi(x+\mu t \vec e),\;\;\;\;\;\;\mu\in \R, \;\,\vec e\in \mathbb S^{n-1},
$$
with $\varphi \in H^1(\R^n)\cap H^2_{loc}(\R^n)$.

\vskip.05in
   The rest of this paper is organized as follows. Section 2 contains the details of the  proof 
of Theorem \ref{theorem4} in the case $V_2\equiv 0$ (the proof of Theorems \ref{theorem2}, \ref{theorem10}, \ref{theorem5}, and \ref{theorem6}, and Corollaries \ref{corollary11} and \ref{corollary12} follows this approach) and the  modifications needed in this proof to obtain the general case.
 The modifications of this argument required to establish
Theorems \ref{theorem4b} will be given in section 3. 
Also section 3 contains some remarks on the proof of Theorem \ref{theorem2}. 
Theorem \ref{theorem5} will be proved in section 4,
and the proofs of Corollaries \ref{corollary11}-\ref{corollary12} will be outlined in section 5. 
Finally, Theorems \ref{theorem20} and \ref{theorem20a} will be proven in section 6. The appendix is concerned with the existence of 
the functions $\varphi$ used in the proofs of Theorem \ref{theorem4} and Theorem \ref{theorem4b}.

\end{section}
\begin{section}{Proof of Theorem \ref{theorem4} \label{S2}}

We begin with two preliminary results.   Let $\mathcal S$ be a symmetric operator independent of $t$. Let  $\mathcal A$ be a skew-symmetric one.

\begin{proposition}
\label{proposition1}
For any $T_0 ,\, T_1\in\R,\;T_0<T_1$ and any suitable function $f(x,t)$ one has
\begin{equation}
\label{claim1}
\begin{aligned}
&\int_{T_0}^{T_1} \int  [\mathcal S;\mathcal A]f\,\overline f\,dxdt+
\int_{T_0}^{T_1} \int |\mathcal Sf|^2 dxdt\\
&\leq 
\int_{T_0}^{T_1} \int |\partial_t f -(\mathcal S+\mathcal A)f|^2 dx dt \\
&+
 |\int \mathcal S f(T_1)\overline{f(T_1)} dx |
 + |\int \mathcal S f(T_0)\overline{f(T_0)} dx |.
 \end{aligned}
 \end{equation}
\end{proposition}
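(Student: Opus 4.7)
The plan is to run the standard commutator/energy identity. Define the real-valued quantity
\begin{equation*}
H(t)=\int \mathcal S f(\cdot,t)\,\overline{f(\cdot,t)}\,dx = \langle \mathcal S f,f\rangle,
\end{equation*}
which is well defined and real because $\mathcal S$ is symmetric. Since $\mathcal S$ is independent of $t$, differentiating under the integral gives
\begin{equation*}
H'(t)=\langle \mathcal S\,\partial_t f,f\rangle+\langle \mathcal S f,\partial_t f\rangle
= 2\,\mathrm{Re}\,\langle \partial_t f,\mathcal S f\rangle.
\end{equation*}

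Next I would introduce the error $g:=\partial_t f-(\mathcal S+\mathcal A)f$ and substitute $\partial_t f=\mathcal S f+\mathcal A f+g$ into the formula for $H'$:
\begin{equation*}
H'(t)=2\|\mathcal S f\|_2^{2}+2\,\mathrm{Re}\,\langle \mathcal A f,\mathcal S f\rangle + 2\,\mathrm{Re}\,\langle g,\mathcal S f\rangle.
\end{equation*}
The middle term is where the commutator appears. Using the symmetry of $\mathcal S$ and the skew-symmetry of $\mathcal A$,
\begin{equation*}
2\,\mathrm{Re}\,\langle \mathcal A f,\mathcal S f\rangle
= \langle \mathcal A f,\mathcal S f\rangle+\langle \mathcal S f,\mathcal A f\rangle
= -\langle f,\mathcal A\mathcal S f\rangle+\langle f,\mathcal S\mathcal A f\rangle
= \langle [\mathcal S,\mathcal A]f,f\rangle,
\end{equation*}
where in the last step I use that $[\mathcal S,\mathcal A]$ is symmetric (its adjoint is $[\mathcal A^{*},\mathcal S^{*}]=[-\mathcal A,\mathcal S]=[\mathcal S,\mathcal A]$), so that $\langle f,[\mathcal S,\mathcal A]f\rangle$ is real and equals $\langle [\mathcal S,\mathcal A]f,f\rangle = \int [\mathcal S;\mathcal A]f\,\overline f\,dx$.

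To absorb the remaining cross term I would apply Cauchy--Schwarz and $2ab\le a^{2}+b^{2}$:
\begin{equation*}
2\,|\mathrm{Re}\,\langle g,\mathcal S f\rangle|\le \|g\|_2^{2}+\|\mathcal S f\|_2^{2}.
\end{equation*}
Combining,
\begin{equation*}
\|\mathcal S f\|_2^{2}+\langle [\mathcal S,\mathcal A]f,f\rangle \le H'(t)+\|g\|_2^{2}.
\end{equation*}
Integrating in $t$ from $T_0$ to $T_1$ and bounding $H(T_1)-H(T_0)\le |H(T_1)|+|H(T_0)|$ yields exactly \eqref{claim1}.

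There is no real obstacle: the only point to be slightly careful about is the \emph{symmetry} of the commutator $[\mathcal S,\mathcal A]$, which is what lets us identify $2\,\mathrm{Re}\,\langle \mathcal A f,\mathcal S f\rangle$ with $\int[\mathcal S;\mathcal A]f\,\overline f\,dx$; and the assumption that $\mathcal S$ is $t$-independent, which is what lets $\partial_t$ pass freely through $\mathcal S$ in the computation of $H'(t)$. The word ``suitable'' in the statement is there to justify the formal manipulations (so that $f(\cdot,t)$ lies in the domain of $\mathcal S$, $\mathcal A$, $[\mathcal S,\mathcal A]$ and all integrations by parts defining symmetry/skew-symmetry are legitimate).
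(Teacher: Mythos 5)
Your proposal is correct and follows essentially the same route as the paper: differentiate $\langle\mathcal S f,f\rangle$ in time using the $t$-independence of $\mathcal S$, substitute $\partial_t f=(\mathcal S+\mathcal A)f+g$, extract the commutator from the cross term via the symmetry of $\mathcal S$ and skew-symmetry of $\mathcal A$, absorb $2\,\mathrm{Re}\,\langle g,\mathcal S f\rangle$ by $2ab\le a^2+b^2$, and integrate in time. The paper's proof is the same computation written in a single chain of identities, so there is nothing to add.
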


\begin{proof}  

Since $\mathcal S$ is independent of $t$ one has
\begin{equation}
\label{AA}
\begin{aligned}
&\partial_t\langle \mathcal S f,f\rangle = \langle \partial_t f, \mathcal S f \rangle + \langle \mathcal Sf,\partial_t f\rangle\\
&=\langle \partial_t f-(\mathcal S+\mathcal A)f, \mathcal S f \rangle + \langle \mathcal Sf,\partial_t f-(\mathcal S+\mathcal A)f\rangle \\
&\;\;+\langle( \mathcal S+\mathcal A) f,\mathcal S f\rangle
+ \langle \mathcal S f,(\mathcal S +\mathcal A) f\rangle\\
&= 2\,\Re \langle \partial_t f-(\mathcal S+\mathcal A)f, \mathcal S f \rangle + 
2 \langle \mathcal S f,\mathcal S f\rangle 
+\langle [\mathcal S\mathcal A-\mathcal A\mathcal S]f,f\rangle.
\end{aligned}
\end{equation}
Thus, integrating in the time interval $[T_0,T_1]$ it follows that
$$
\aligned
&\int_{T_0}^{T_1} \langle [\mathcal S;\mathcal A]f,f\rangle dt + 2\int_{T_0}^{T_1} 
\langle \mathcal S f,\mathcal S f\rangle dt\\
&= - 2\; \Re \,\int_{T_0}^{T_1}  \langle \partial_t f-(\mathcal S+\mathcal A)f, \mathcal S f \rangle + 
\langle \mathcal Sf,f\rangle |_{T_0}^{T_1}. 
\endaligned
$$
Then, using that $2ab\leq a^2+b^2$ we obtain \eqref{claim1}.
\end{proof}

Next, for a fixed  $T\in\R$ we define $ \eta :[T-1/2,T+1/2]\to \R$ as
$$
\eta(t)=(t-(T-1/2))((T+1/2)-t),
$$
so $\eta(T-1/2)=\eta(T+1/2)=0$ and for any $t\in [T-1/2,T+1/2]$ 
$$
0\leq \eta(t)\leq 1/4,\;\;\;\;\;|\eta'(t)|\leq 1,\;\;\;\;\;\eta''(t)=-2.
$$

\begin{proposition}
\label{proposition2}
For any $T>1/2$ one has
\begin{equation}
\label{claim2}
\begin{aligned}
&\int_{T-1/2}^{T+1/2} \int  \eta(t) (|\mathcal Sf|^2+ [\mathcal S;\mathcal A]f\,\overline f )\,dxdt  + \int_{T-1/2}^{T+1/2} \int |f|^2dx dt\\\
&\leq 
8 \,\int_{T-1/2}^{T+1/2} \int |\partial_t f -(\mathcal S+\mathcal A)f|^2 dx dt 
 + 8\, |\int|f|^2dx|_{T-1/2}^{T+1/2}\,|.
 \end{aligned}
 \end{equation}
\end{proposition}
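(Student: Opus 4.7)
My plan is to revisit the pointwise identity
\begin{equation*}
\partial_t \langle \mathcal Sf,f\rangle = 2\,\Re\langle \partial_t f-(\mathcal S+\mathcal A)f,\mathcal Sf\rangle +2\,\|\mathcal Sf\|_2^2+\langle[\mathcal S;\mathcal A]f,f\rangle,
\end{equation*}
which is \eqref{AA} from the proof of Proposition \ref{proposition1}, now with the time cutoff $\eta(t)$ inserted as a multiplier. Multiplying by $\eta$, integrating in $t$ over $[T-1/2,T+1/2]$, and integrating by parts once in $t$ (the boundary contributions vanish because $\eta(T\pm 1/2)=0$) yields the weighted identity
\begin{equation*}
2\!\int\!\eta\,\|\mathcal Sf\|_2^2\,dt + \int\!\eta\,\langle[\mathcal S;\mathcal A]f,f\rangle\,dt
= -\!\int\!\eta'\langle \mathcal Sf,f\rangle\,dt - 2\!\int\!\eta\,\Re\langle\partial_t f-(\mathcal S+\mathcal A)f,\mathcal Sf\rangle\,dt.
\end{equation*}

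To generate the term $\int|f|^2\,dx\,dt$ on the left-hand side, I will use the companion identity $\partial_t \|f\|_2^2 = 2\langle\mathcal Sf,f\rangle+2\,\Re\langle\partial_t f-(\mathcal S+\mathcal A)f,f\rangle$, which follows at once from the symmetry of $\mathcal S$ and the skew-symmetry of $\mathcal A$. Using it to solve for $\langle \mathcal Sf,f\rangle$ and substituting inside $-\int\eta'\langle\mathcal Sf,f\rangle\,dt$, a second integration by parts in $t$ applied to the resulting $-\tfrac12\int \eta'\,\partial_t\|f\|_2^2\,dt$ exploits $\eta''\equiv -2$ together with the boundary values $\eta'(T-\tfrac12)=1$, $\eta'(T+\tfrac12)=-1$ to produce precisely $+\int\|f\|_2^2\,dt$ on the left and the boundary contribution $\tfrac12\bigl(\|f(T+\tfrac12)\|_2^2+\|f(T-\tfrac12)\|_2^2\bigr)$ on the right, together with a residual source term of the form $\int\eta'\,\Re\langle\partial_t f-(\mathcal S+\mathcal A)f,f\rangle\,dt$.

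To finish, I will control the two remaining source terms by Cauchy-Schwarz combined with the pointwise bounds $|\eta|\leq 1/4$ and $|\eta'|\leq 1$. With the abbreviation $a\equiv\partial_t f-(\mathcal S+\mathcal A)f$, the splits $2\eta\,|a|\,|\mathcal Sf|\leq \tfrac14|a|^2+\eta\,|\mathcal Sf|^2$ and $|\eta'\,a\,f|\leq \tfrac12|a|^2+\tfrac12|f|^2$ allow $\int\eta\,|\mathcal Sf|^2$ and $\tfrac12\int|f|^2$ to be absorbed into the left-hand side; the coefficient $2$ in front of $\int\eta\,\|\mathcal Sf\|_2^2$ supplies the room needed for the first absorption. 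The resulting inequality is already stronger than \eqref{claim2} (any excess on $\int\eta\,|\mathcal Sf|^2\geq 0$ may simply be discarded), and the generous factor $8$ on the right of \eqref{claim2} comfortably absorbs all leftover numerical constants. The only real subtlety is the bookkeeping through the absorption step; no delicate optimization is required.
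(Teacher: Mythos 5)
Your argument is correct and is essentially the paper's own proof: you derive the same weighted identity as \eqref{eq15} (up to an overall factor of $2$), merely organizing the two integrations by parts in the reverse order (starting from $\eta\,\partial_t\langle\mathcal Sf,f\rangle$ and then eliminating $\langle\mathcal Sf,f\rangle$ via $\partial_t\|f\|_2^2$, rather than starting from $\eta'\,\partial_t\langle f,f\rangle$), and you close with the same Cauchy--Schwarz absorption using $0\le\eta\le 1/4$, $|\eta'|\le 1$, $\eta''=-2$. Note only that, exactly as in the paper's computation, the boundary contribution that actually arises is the sum $\tfrac12\bigl(\|f(T+\tfrac12)\|_2^2+\|f(T-\tfrac12)\|_2^2\bigr)$ rather than the difference written in \eqref{claim2}, which is how the estimate is in fact used in Step 1.
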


\begin{proof}  

Since 
\begin{equation}
\label{eq11}
\begin{aligned}
&\partial_t\langle  f,f\rangle = \langle \partial_t f, f \rangle + \langle f,\partial_t f\rangle\\
&=\langle \partial_t f-(\mathcal S+\mathcal A)f,  f \rangle + \langle f,\partial_t f-(\mathcal S+\mathcal A)f\rangle\\
& +
\langle( \mathcal S+\mathcal A) f, f\rangle
+ \langle  f,(\mathcal S +\mathcal A) f\rangle\\
&= 2\,\Re \langle \partial_t f-(\mathcal S+\mathcal A)f,  f \rangle + 
2 \langle \mathcal S f, f\rangle, 
\end{aligned}
\end{equation}
multiplying by $\eta'(t) $ and  integrating in the time interval $[T-1/2,T+1/2]$ one gets 
\begin{equation}
\label{eq11b}
\begin{aligned}
&-2 \,\int_{T-1/2}^{T+1/2} \eta'(t)\langle \mathcal S f,f\rangle dt\\
&= 2\Re  \,\int_{T-1/2}^{T+1/2}  \langle \partial_t f-(\mathcal S+\mathcal A)f, f \rangle \eta'(t) dt -  
\int_{T-1/2}^{T+1/2} \partial_t \langle f,f\rangle  \eta'(t) dt.
\end{aligned}
\end{equation}

Integration by parts gives
\begin{equation}
\label{eq12}
- \int_{T-1/2}^{T+1/2} \partial_t \langle f,f\rangle  \eta'(t) dt
= - \langle f,f \rangle \eta'(t)|_{T-1/2}^{T+1/2} + \int_{T-1/2}^{T+1/2} \langle f,f\rangle  \eta''(t) dt
\end{equation}
and
\begin{equation}
\label{eq13}
-2 \,\int_{T-1/2}^{T+1/2} \eta'(t)\langle \mathcal S f,f\rangle dt= 2 
\,\int_{T-1/2}^{T+1/2} \eta(t)\,\partial_t\langle \mathcal S f,f\rangle dt.
\end{equation}

We recall that from  \eqref{AA} one has 
\begin{equation}
\label{eq14}
\partial_t \langle \mathcal Sf,f\rangle =
2\,\Re \langle \partial_t f-(\mathcal S+\mathcal A)f, \mathcal S f \rangle + 
2 \langle \mathcal S f,\mathcal S f\rangle 
+
\langle [\mathcal S;\mathcal A]f,f\rangle,
\end{equation}
so inserting \eqref{eq14} into \eqref{eq13}, and the result together with  \eqref{eq12}
into \eqref{eq11b} it follows that
\begin{equation}
\label{eq15}
\begin{aligned}
&4 \,\int_{T-1/2}^{T+1/2} \eta(t)  \langle \mathcal S f,\mathcal S f\rangle dt 
+ 2 \,\int_{T-1/2}^{T+1/2} \eta(t) \langle [\mathcal S;\mathcal A]f,f\rangle dt\\
&=- 4 \Re \int_{T-1/2}^{T+1/2} \eta(t) \langle \partial_t f-(\mathcal S+\mathcal A)f, \mathcal S f \rangle dt\\
&+ 2\,\Re  \,\int_{T-1/2}^{T+1/2}  \langle \partial_t f-(\mathcal S+\mathcal A)f, f \rangle \eta'(t) dt \\
&- \langle f,f \rangle \eta'(t)|_{T_1/2}^{T+1/2} + \int_{T-1/2}^{T+1/2} \langle f,f\rangle  \eta''(t) dt,
\end{aligned}
\end{equation}
which combined with the properties of the function 
$\eta$ and Cauchy-Schwarz yields the estimates \eqref{claim2}.

\end{proof}

\underline{Proof of Theorem \ref{theorem4}: case $V_2\equiv 0$.}

\vskip.1in
  We  fix $\alpha\in[0,1/2)$ and $p=(4-2\alpha)/3\in(1,4/3]$. Let $\varphi=\varphi_p$ be a $C^4,$ radial, strictly convex 
  function on compact sets of $\R^n$,
such that 
\begin{equation}
\label{772}
\begin{aligned}
& \varphi(r)=r^p+\beta, \;\;\;\;\text{for}\;\;\;r=|x|\geq 1,\\
&\varphi(0)=0,\;\;\;\;\;\;\;\;\;\varphi(r)>0,\;\;\;\text{for}\;\;\;r>0,\\
&\;\exists \,M>0 \;\;\;\;\text{s.t.} \;\;\;\varphi(r)\leq M r^p,\;\;\;\,\forall \;r\in[0,\infty).
\end{aligned}
\end{equation}
The existence of such a function $\varphi=\varphi_p$ will be discussed in the Appendix, part (a).

We recall that
\begin{equation}
\label{hessiana}
D^2\varphi = \partial^2_r\varphi \left(\frac{x_jx_k}{r^2}\right)+
\frac{\partial_r\varphi}{r}\left(\delta_{jk}-\frac{x_jx_k}{r^2}\right).
\end{equation}
Therefore,
\begin{equation}
\label{hessian2a}
\nabla \varphi D^2\varphi \nabla \varphi =\partial^2_r\varphi (\partial_r\varphi)^2
=\frac{c}{\; |x|^{4-3p}},\;\;\;\;\;\;\;\text{for}\;\;\;\;\;\;r=|x|\geq 1,
\end{equation}
and
\begin{equation}
\label{hessian2'a}
 D^2\varphi \geq p(p-1)r^{p-2} I,\;\;\;\;\;\;\;\text{for}\;\;\;\;\;\;r=|x|\geq 1.
\end{equation}

Let  $f(x,t)=e^{\lambda \varphi(x)}u(x,t)$ where $u(x,t)$ is a solution of the IVP \eqref{e1}  so
\begin{equation}
\label{eq1}
e^{\lambda \varphi}(\partial_t-i \Delta) u = e^{\lambda \varphi}(\partial_t- i \Delta) (e^{-\lambda \varphi}f)=\partial_t f -\mathcal S f -\mathcal A f,
\end{equation}
where $\mathcal S$ is symmetric and $\mathcal A$ skew-symmetric both independent of $t$ with
\begin{equation}
\label{defsa}
\mathcal S=-i\lambda (2\nabla \varphi\cdot \nabla +\Delta \varphi),\;\;\;\;\;\;\;\mathcal A=i(\Delta +\lambda^2|\nabla\varphi|^2),
\end{equation}
so that
\begin{equation}
\label{comm}
[\mathcal S;\mathcal A]=-\lambda ((4\nabla\cdot\,D^2\varphi\nabla\;)-4\lambda^2 \nabla \varphi D^2\varphi\nabla\varphi+\Delta^2\varphi).
\end{equation}

We divide the proof into three steps:

\underline {Step 1} : If 
\begin{equation}
\label{clambda}
\sup_{t>0}\,\int \,e^{\lambda |x|^p}|u(x,t)|^2 dx\leq c_{\lambda},\;\;\;\;\;\;\;\;p=(4-2\alpha)/3.
\end{equation}
Then  there exists $\{T_j\,:\,j\in \Z^+\}$ with $T_j\uparrow \infty$ as $j\uparrow \infty$  such that
\begin{equation}
\label{clambda2}
\sup_{j\in\Z^+}\,\int \,|\mathcal S f(x,T_j)|^2 dx\leq \widetilde c_{\lambda},
\end{equation}
where
$$
f=e^{\lambda \varphi(x)}u(x,t),
$$
$\mathcal S$ as in \eqref{defsa}, and $\widetilde c_{\lambda}$ denoting a constant depending on $c_{\lambda}$ in \eqref{clambda},
$\lambda$, $\|V\|_{\infty}$ and $p$.

\underline{Proof of step 1} : We combine Proposition \ref{proposition2} 
with \eqref{comm} passing the term involving $\Delta^2\varphi$ to the right hand side and using that the rest of 
the commutator in \eqref{comm} is positive to obtain
\begin{equation}
\label{step1-eq1}
\begin{aligned}
\int_{T-1/2}^{T+1/2}\int &|\mathcal Sf|^2\eta(t)dx dt\leq 
8\,(\int_{T-1/2}^{T+1/2}\int|\partial_tf-\mathcal Sf-\mathcal Af|^2dxdt\\
&+\lambda \|\Delta^2\varphi\|_{\infty} \int_{T-1/2}^{T+1/2}\int | f|^2 dx dt  + |\int|f|^2dx|^{T+1/2}_{T-1/2}\,|)\equiv B.
\end{aligned}
\end{equation}

We use that
$$
e^{\lambda \varphi}(\partial_t-i \Delta)u=\partial_t f -\mathcal S f -\mathcal A f,\;\;\;\;\;\;(\partial_t - i \Delta) u = i V u,
$$
to bound the right hand side of \eqref{step1-eq1} as
\begin{equation}
\label{step1-eq2}
B\leq c( \lambda \| \Delta^2\varphi\|_{\infty} +\sup_{t>0}\|V(\cdot,t)\|^2_{\infty})\, \sup_{t>0}\,\int \,e^{2\lambda \varphi}|u(x,t)|^2dx
\leq \tilde c_{\lambda}.
\end{equation}
Inserting this in \eqref{step1-eq1} and using that $\eta(t)\geq 3/16$ for $t\in [T-1/4,T+1/4]$ one gets that
$$
\aligned
&\tilde c_{\lambda}\geq \int_{T-1/2}^{T+1/2}\int |\mathcal Sf|^2\eta \,dx dt\geq \int_{T-1/4}^{T+1/4}\int |\mathcal Sf|^2\eta \,dx dt
\\
&\geq \frac{3}{16} \int_{T-1/4}^{T+1/4}\int |\mathcal Sf|^2dx  dt\geq \frac{3}{32}\int |\mathcal Sf(x,T^*)|^2dx ,
\endaligned
$$
for some $T^*\in [T-1/4,T+1/4]$. Hence, we can find a sequence $\{T_j\,:\,j\in \Z^+\}$ with $t_j\uparrow \infty$ an $j\uparrow \infty$ such that
\begin{equation}
\label{step1final}
\sup_{j\in\Z^+} \,\int |\mathcal Sf(x,T_j)|^2dx \leq \tilde c_{\lambda}.
\end{equation}

\underline {Step 2} : There exists $\lambda _0>0$ such that if $\lambda \geq \lambda_0$, then for any $j\in\Z^+$,
\begin{equation}
\label{step2-eq1}
\int_{T_1}^{T_j}\int \frac{ e^{2\lambda \varphi(x)}\,|u(x,t)|^2}{\langle x\rangle^{4-3p}}\,dx dt \leq \tilde c_{\lambda}\;\;\;\;\;\;\text{uniformly in }\;j\in\Z^+.
\end{equation}

\underline{Proof of step 2} :  A combination of  Proposition \ref{proposition1}, the conclusion of step 1, and  our hypothesis  
leads to
\begin{equation}
\label{step2-eq1a}
\int_{T_1}^{T_j}\int [\mathcal S;\mathcal A] f \overline{f} dx dt \leq 
\int_{T_1}^{T_j}\int |e^{\lambda \varphi} V u|^2 dx dt + \tilde c_{\lambda}.
\end{equation}
From our hypothesis \eqref{772} on $\varphi$ one has that
\begin{equation}
\label{step2-eq2}
\begin{aligned}
&\nabla \varphi D^2\varphi \nabla \varphi\geq \frac{c}{\; |x|^{4-3p}},\;\;\;\;\;|x|\geq 1,\\
&|\Delta^2 \varphi(x)|\leq \frac{c}{ \langle x\rangle^2},\;\;\;\;\;\;\;\;\;\;\;\forall x\in \R^n.
\end{aligned}
\end{equation}
Thus, from our decay hypothesis on the potential \eqref{potcon2bc} it follows that there exists $\widetilde \lambda>0$ such 
that if $\lambda \geq\widetilde \lambda$ and $|x|\geq 1$, then
\begin{equation}
\label{007}
2\lambda^2 \nabla \varphi D^2\varphi \nabla \varphi+\Delta^2 \varphi -|V|^2\geq \frac{\lambda}{\langle x\rangle^{4-3p}} .
\end{equation}

Next, for any $\epsilon\in(0,1)$ we consider the domain $\{x\,:\,\epsilon\leq|x|\leq1\}$. In this set we have that
\begin{equation}
\label{betweene1}
\nabla \varphi D^2\varphi \nabla \varphi \geq c_{\varphi,\epsilon},\;\;\;\;\;\text{for}\;\;\;\;\epsilon\leq|x|\leq1.
\end{equation}
Therefore, for large enough $\lambda\geq \lambda_{\epsilon}$ 
\begin{equation}
\label{00711}
\lambda^2 \nabla \varphi D^2\varphi \nabla \varphi+\Delta^2 \varphi -|V|^2\geq \lambda,\;\;\;\;\;\text{for}\;\;\;\;\epsilon\leq|x|\leq1.
\end{equation}

Hence from \eqref{step2-eq1a} 
\begin{equation}
\label{step2-eq3}
\begin{aligned}
&4 \lambda\int_{T_1}^{T_j}\int \nabla f D^2\varphi \nabla\overline{f} dxdt +
2\lambda ^3 \int_{T_1}^{T_j}\int \nabla \varphi D^2\varphi \nabla \varphi |f|^2 dxdt\\
&\leq \tilde c_{\lambda}+ c' \,(\lambda \|\Delta^2\varphi\|_{\infty}+\|V\|^2_{\infty}) \int_{T_1}^{T_j}\int_{|x|\leq \epsilon} |f|^2 dxdt.
\end{aligned}
\end{equation}

In the domain $\{x\,:\,|x|\leq\epsilon\}$ we shall use that $\varphi $ is strictly convex in $r=|x|\leq 2$ to get from \eqref{step2-eq3} that 
\begin{equation}
\label{step2-eq4}
\begin{aligned}
&4 c_{\varphi}\lambda\int_{T_1}^{T_j}\int_{|x|\leq 2\epsilon}  |\nabla f|^2 dxdt +
2\lambda ^3 \int_{T_1}^{T_j}\int \nabla \varphi D^2\varphi \nabla \varphi |f|^2 dxdt\\
&\leq \tilde c_{\lambda} + c'\, (\lambda \|\Delta^2\varphi\|_{\infty}+\|V\|^2_{\infty})\,\int_{T_1}^{T_j}\int_{|x|\leq \epsilon} |f|^2 dxdt,
\end{aligned}
\end{equation}
with $c_{\varphi}$ and $c'$ independent of $\epsilon\in(0,1]$.
Now, we pick $\theta \in C^{\infty}(\R^n)$ such that $\theta(x)\equiv 1$ for $|x|\leq \epsilon$ with $supp\,\theta\subset \{x\,:\,|x|\leq 2 \epsilon\}$ and use Poincare's inequality to get that for each $t\in [T_1,T_j]$ 
\begin{equation}
\label{step2-eq4b} 
\begin{aligned}
&\int_{|x|\leq \epsilon} |f|^2dx\leq \int_{|x|\leq 2\epsilon}|\theta f|^2dx\leq c_{\varphi}\, \epsilon^2 \int_{|x|\leq 2\epsilon} |\nabla (\theta f)|^2 dx \\
&\leq  c_{\varphi}\, \epsilon^2 \int _{|x|\leq 2\epsilon} |\nabla  f |^2 dx + c_{\varphi} \int_{\epsilon\leq |x|\leq 2\epsilon} |f|^2 dx.
\end{aligned}
\end{equation}

 Fixing $\epsilon$ sufficiently small and then $\lambda$ large enough it follows from this that 
 \begin{equation}
\label{AAAA} 
\begin{aligned}
& \lambda\int_{T_1}^{T_j}\int \nabla f D^2\varphi \nabla \overline{f} dxdt + \lambda^2 \int_{T_1}^{T_j}\int_{|x|\leq 1} |f|^2 dxdt \\
&+\lambda ^3 \int_{T_1}^{T_j}\int \nabla \varphi D^2\varphi \nabla \varphi |f|^2 dxdt  \leq \tilde c_{\lambda}.
\end{aligned}
\end{equation}
In particular,  for $\lambda_0\geq \widetilde \lambda$ sufficiently large we have
\begin{equation}
\label{step2-eq5} 
 \int_{T_1}^{T_j} \,\int\,\frac{|f|^2}{\langle x\rangle^{4-3p}}dxdt\leq \widetilde c_{\lambda},\;\;\;\text{uniformly in}\;j\in\Z+,
\end{equation}
which completes the proof of this step.

\vskip.1in

We fix $\;\lambda=\lambda_0$ above for the rest of the proof.
\vskip.1in

\underline {Step 3} : $u(x,t)\equiv 0$. 

\underline{Proof of step 3} : On the one hand, since the potential $V=V(x,t)$ is real, then the $L^2$-norm of the solution $u(x,t)$
of \eqref{e1} is preserved, i.e. for all $t\in \R$ 
$$
\|u(\cdot,t)\|_2=\|u_0\|_2.
$$
On the other hand, from step 2 inequality \eqref{step2-eq1} one has 
$$
\aligned
(T_j-T_1)\|u_0\|_2^2&=\int_{T_1}^{T_j}\int |u(x,t)|^2dx dt\\
&=\int_{T_1}^{T_j} \int  |u(x,t)|^2\frac{e^{2\lambda \varphi}}{\langle x\rangle^{4-3p}}\,
\langle x\rangle^{4-3p} e^{-2\lambda \varphi}\,dx dt\\
&\leq \sup_{x\in\R^n}(\langle x\rangle^{4-3p} e^{-2\lambda \varphi})\,
\int_{T_1}^{T_j}\int  |u(x,t)|^2\frac{e^{2\lambda \varphi}}{\langle x\rangle^{4-3p}} dx dt\leq \widetilde c_{\lambda_0},
\endaligned
$$
which completes the proof of Theorem \ref{theorem4} in the case $V_2\equiv 0$.
\vskip.1in

\underline{Proof of Theorem \ref{theorem4}: general case.}
\vskip.1in
The argument is similar to that presented above in the case $V_2\equiv 0$, so we sketch it. The step 1 is similar so it will be omitted. In the step 2 we divide the potential $V(x,t)$ as in  \eqref{split-pot},
$$
V(x,t)=V_1(x,t)+V_2(x,t),
$$ and define
\begin{equation}
\label{newAS}
\mathcal S=-i\lambda (2\nabla \varphi\cdot \nabla +\Delta \varphi),\;\;\;\;\;\;\;\mathcal A=i(\Delta +V_2+\lambda^2|\nabla\varphi|^2),
\end{equation}
so that
\begin{equation}
\label{comm2}
[\mathcal S;\mathcal A]=-\lambda ((4\nabla\cdot\,D^2\varphi\nabla\;)-4\lambda^2 \nabla \varphi D^2\varphi\nabla\varphi+\Delta^2\varphi)+ 2\lambda \nabla\varphi\cdot \nabla V_2=D_1+D_2.
\end{equation}

We notice that $D_1$ is similar to the term handled in the proof of Theorem \ref{theorem4} in the case $V_2\equiv 0$, and that since $\varphi$ is radial and 
convex one has
\begin{equation}
\label{con12}
D_2=2\lambda \nabla\varphi \cdot\nabla V_2=2\lambda \partial_r\varphi\partial_r V_2\geq 2\lambda  \partial_r\varphi
(\partial_r V_2)^{-}.
\end{equation}
\vskip.1in
Thus, from our decay hypothesis on the potential it follows that there exists $\lambda_0>0$ such 
that if $\lambda \geq \lambda_0$ and $|x|\geq 1$, then
\begin{equation}
\label{007b}
2\lambda^2 \nabla \varphi D^2\varphi \nabla \varphi+\Delta^2 \varphi -|V_1|^2
+2\partial_r\varphi
(\partial_r V_2)^{-}\geq \frac{\lambda}{\langle x\rangle^{4-3p}} .
\end{equation}
For $|x|\leq 1$ we apply the argument in the proof of Theorem \ref{theorem4} in the case $V_2\equiv 0$. Therefore combining these
estimates we obtain the proof of the step 2 :
There exists $\lambda _0>0$ such that if $\lambda \geq \lambda_0$, then for any $j\in\Z^+$
\begin{equation}
\label{step2-eq1b}
\int_{T_1}^{T_j}\int \frac{ e^{2\lambda \varphi(x)}\,|u(x,t)|^2}{\langle x\rangle^{4-3p}}\,dx dt \leq \tilde c_{\lambda}\;\;\;\;\;\;\text{independent of }\;j\in\Z^+.
\end{equation}

Once \eqref{step2-eq1b} has been established the rest of the proof follows the same argument given in the step 3
of the proof of Theorem \ref{theorem4} in the case $V_2\equiv 0$.
\end{section}
\begin{section}{Proofs of Theorem \ref{theorem4b} and Theorem \ref{theorem2}   \label{S3}}
\underline{Proof of Theorem \ref{theorem4b}: case $V_2\equiv 0.$}
\vskip.1in

We shall follow the argument provided in the proof Theorem \ref{theorem4}. A main difference  is the choice of the function 
$\varphi$ in \eqref{772}. In this case we take $\varphi\in C^4\,$ to be  a radial, strictly convex function on compact sets of $\R^n$, such that 
\begin{equation}
\label{newvarphi}
\varphi(r)=3 r-\int_1^r\,\frac{dr}{1+\log r}+\beta,\;\;\;\;\;\;\;\;\;\;r=|x|\geq 1,
\end{equation}
so
\begin{equation}
\label{newvarphi1}
\partial_r\varphi(x)=3-\frac{1}{1+\log r},\;\;\;\;\partial^2_r\varphi(x)=\frac{1}{r(1+\log r)^2},
\;\;\;\;r=|x|\geq 1,\\
\end{equation}
and
\begin{equation}
\label{772bb}
\begin{aligned}
& \varphi(0)=0,\;\;\;\;\;\varphi(r)>0,\;\;\;\text{for}\;\;r>0,\\
&\;\exists\, M>0 \;\;\;\;\text{s.t.} \;\;\;\varphi(r)\leq M r,\;\;\;\,\forall \;r\in[0,\infty).
\end{aligned}
\end{equation}
The existence of such a function $\varphi$ will be proven in the Appendix, part (b). 
Since
\begin{equation}
\label{hessian}
D^2\varphi = \partial^2_r\varphi \left(\frac{x_jx_k}{r^2}\right)+
\frac{\partial_r\varphi}{r}\left(\delta_{jk}-\frac{x_jx_k}{r^2}\right),
\end{equation}
for $|x|\geq 1$ one has 
\begin{equation}
\label{hessian2}
\nabla \varphi D^2\varphi \nabla \varphi =\partial^2_r\varphi (\partial_r\varphi)^2
>\frac{1}{r\,(1+\log r)^2 },
\end{equation}
and
\begin{equation}
\label{hessian2aaa}
D^2\varphi \geq \partial^2_r\varphi(x)I.
\end{equation}

The step 1 is similar to that in the proof of Theorem \ref{theorem4}, with the appropriate modifications, hence we shall start with  step 2. \newline
\underline {Step 2} : There exists $\lambda _0>0$ such that if $\lambda \geq \lambda_0$, then for any $j\in\Z^+$
\begin{equation}
\label{step2b-eq1}
\int_{T_1}^{T_j}\int \frac{ e^{2\lambda \varphi(x)}\,|u(x,t)|^2}{\langle x\rangle\,(\log\langle x\rangle)^2}\,dx dt \leq \tilde c_{\lambda}
\;\;\;\;\;\;\text{independent of }\;j\in\Z^+.
\end{equation}

\underline{Proof of step 2} :  A combination of  Proposition \ref{proposition1}, the conclusion of step 1, and  our hypothesis  
leads to
\begin{equation}
\label{step2b-eq1ab}
\int_{T_1}^{T_j}\int [\mathcal S;\mathcal A] f \overline{f} dx dt \leq 
\int_{T_1}^{T_j}\int |e^{\lambda \varphi} V u|^2 dx dt + \tilde c_{\lambda}.
\end{equation}
From our assumptions  on $\varphi$ it follows that
\begin{equation}
\label{step2b-eq2}
|\Delta^2 \varphi(x)|\leq \frac{c}{ \langle x\rangle^2},\;\;\;\;\;\;\;\;\;\;\;\forall x\,\in \R^n.
\end{equation}
Using the decay hypothesis on the potential \eqref{potcon2bcd} one has  that there exists $\widetilde \lambda>0$ such 
that if $\lambda \geq\widetilde \lambda$ and $|x|\geq 1$, then
\begin{equation}
\label{007bb}
2\lambda^2 \nabla \varphi D^2\varphi \nabla \varphi+\Delta^2 \varphi -|V|^2\geq \frac{\lambda}{\,r\,(1+\log r)^2 } .
\end{equation}

Thus,  from \eqref{step2b-eq1ab} and $\lambda>>1$
\begin{equation}
\label{step2b-eq3}
\begin{aligned}
&4 \lambda\int_{T_1}^{T_j}\int \nabla f D^2\varphi \nabla\overline{f} dxdt +
2 \lambda ^3 \int_{T_1}^{T_j}\int \nabla \varphi D^2\varphi \nabla \varphi |f|^2 dxdt\\
&\leq \tilde c_{\lambda}+ c\,(\lambda \|\Delta^2\varphi\|_{\infty}+\|V\|_{\infty}) \int_{T_1}^{T_j}\int_{|x|\leq 1} |f|^2 dxdt\\
&\leq \tilde c_{\lambda}+ c\,\lambda\, \int_{T_1}^{T_j}\int_{|x|\leq 1} |f|^2 dxdt. 
\end{aligned}
\end{equation}

Next, for a fixed $\epsilon\in(0,1)$ we consider the domain $\{x\,:\,\epsilon\leq|x|\leq1\}$. In this region
\begin{equation}
\label{betweene1aa}
\nabla \varphi D^2\varphi \nabla \varphi \geq c_{\varphi,\epsilon},\;\;\;\;\;\text{for}\;\;\;\;\epsilon\leq|x|\leq1.
\end{equation}
Therefore, for large enough $\lambda\geq \lambda_{\epsilon}$ 
\begin{equation}
\label{00711a}
\lambda^2 \nabla \varphi D^2\varphi \nabla \varphi+\Delta^2 \varphi -|V|^2\geq \lambda,\;\;\;\;\;\text{for}\;\;\;\;\epsilon\leq|x|\leq1.
\end{equation}

Hence  
\begin{equation}
\label{step2-eq3aa}
\begin{aligned}
&4 \lambda\int_{T_1}^{T_j}\int \nabla f D^2\varphi \nabla \overline{f} dxdt +
2\lambda ^3 \int_{T_1}^{T_j}\int \nabla \varphi D^2\varphi \nabla \varphi |f|^2 dxdt\\
&\leq \tilde c_{\lambda}+ c' \,\lambda \,\int_{T_1}^{T_j}\int_{|x|\leq \epsilon} |f|^2 dxdt,
\end{aligned}
\end{equation}
with $c'$ independent of $\epsilon\in(0,1]$.
In the domain $\{x\,:\,|x|\leq\epsilon\}$ we shall use that $\varphi $ is strictly convex in $r=|x|\leq 2$ to get from \eqref{step2b-eq3} that 
\begin{equation}
\label{step2-eq4ba}
\begin{aligned}
&2 c_{\varphi}\lambda\int_{T_1}^{T_j}\int_{|x|\leq 2\epsilon}  |\nabla f|^2 dxdt +
\lambda ^3 \int_{T_1}^{T_j}\int \nabla \varphi D^2\varphi \nabla \varphi |f|^2 dxdt\\
&\leq \tilde c_{\lambda} + c'\, \lambda \,\int_{T_1}^{T_j}\int_{|x|\leq \epsilon} |f|^2 dxdt,
\end{aligned}
\end{equation}
with $c_{\varphi}$ and $c'$ independent of $\epsilon\in(0,1]$.
Choosing $\theta \in C^{\infty}(\R^n)$ such that $\theta(x)\equiv 1$ for $|x|\leq \epsilon$ with $supp\,\theta\subset \{x\,:\,|x|\leq 2 \epsilon\}$ and using Poincare's inequality to get that for each $t\in [T_1,T_j]$ it follows that
\begin{equation}
\label{step2-eq4baa} 
\begin{aligned}
&\int_{|x|\leq \epsilon} |f|^2dx\leq \int_{|x|\leq 2\epsilon}|\theta f|^2dx\leq c_{\varphi}\, \epsilon^2 \int_{|x|\leq 2\epsilon} |\nabla (\theta f)|^2 dx \\
&\leq  c_{\varphi}\, \epsilon^2 \int _{|x|\leq 2\epsilon} |\nabla  f |^2 dx + c_{\varphi} \int_{\epsilon\leq |x|\leq 2\epsilon} |f|^2 dx.
\end{aligned}
\end{equation}

Gathering  the above estimates by fixing $\epsilon$ sufficiently small and then $\lambda>\widetilde \lambda$ large enough one concludes that
 \begin{equation}
\label{AAAAA} 
\begin{aligned}
& \lambda\int_{T_1}^{T_j}\int \nabla f D^2\varphi \nabla f dxdt + \lambda^2 \int_{T_1}^{T_j}\int_{|x|\leq 1} |f|^2 dxdt \\
&+\lambda ^3 \int_{T_1}^{T_j}\int \nabla \varphi D^2\varphi \nabla \varphi |f|^2 dxdt  \leq \tilde c_{\lambda}.
\end{aligned}
\end{equation}
In particular
\begin{equation}
\label{step2b-eq5ba} 
 \int_{T_1}^{T_j} \,\int\,\frac{|f|^2}{\langle x\rangle\,(\log\langle x\rangle)^2}dxdt\leq \widetilde c_{\lambda},\;\;\;\text{independent of }\;j\in\Z+,
\end{equation}
which completes the proof of the  step 2.

\vskip.1in

We fixed $\;\lambda=\lambda_0$ above for the rest of the proof.
\vskip.1in

\underline {Step 3} : $u(x,t)\equiv 0$ 

\underline{Proof of step 3} : On one hand, since the potential $V=V(x,t)$ is real, then the $L^2$-norm of the solution $u(x,t)$
of \eqref{e1} is preserved, i.e. for all $t\in \R$ 
$$
\|u(\cdot,t)\|_2=\|u_0\|_2.
$$
On the other hand, from step 2 \eqref{step2b-eq1}
$$
\aligned
(T_j-T_1)\|u_0\|_2^2&=\int_{T_1}^{T_j}\int |u(x,t)|^2dx dt\\
&=\int_{T_1}^{T_j} \int  |u(x,t)|^2\frac{e^{2\lambda \varphi}}{\langle x\rangle\,(\log\langle x\rangle)^2}\,
{\langle x\rangle\,(\log\langle x\rangle)^2}\,e^{-2\lambda \varphi}\,dx dt\\
&\leq \sup_{x\in\R^n}({\langle x\rangle\,(\log\langle x\rangle)^2}\,e^{-2\lambda \varphi})\,
\int_{T_1}^{T_j}\int  |u(x,t)|^2\frac{e^{2\lambda \varphi}}{\langle x\rangle\,(\log\langle x\rangle)^2}\,dx dt\\
&\leq \widetilde c_{\lambda_0},
\endaligned
$$
which completes the proof of Theorem \ref{theorem4b} in the case $V_2\equiv 0$.

The proof in the general case follows the same argument already explained in the proof of Theorem \ref{theorem4} so it will be omitted.

\vskip.1in
\underline{Proof of Theorem \ref{theorem2}}
\vskip.1in

 The only differences with the previous cases  are following computations:
 $$
 \mathcal S=-i\,\lambda(2\nabla\varphi\cdot\widetilde \nabla+\mathcal L_k),\;\;\;\;\;\;\;\widetilde \nabla=(\partial_{x_1},..,\partial_{x_k},-\partial_{x_{k+1}},..,-\partial_{x_n}),
 $$
 $$
 \mathcal A=i(\mathcal L_k+\lambda^2((\partial_{x_1}\varphi)^2+..+(\partial_{x_k}\varphi)^2-(\partial_{x_{k+1}}\varphi)^2-(\partial_{x_n}\varphi)^2),
 $$
so
$$
[\mathcal S;\mathcal A]= -\lambda((4\widetilde\nabla\cdot D^2\varphi\widetilde\nabla )-4\lambda^2\widetilde\nabla\varphi D^2\varphi\widetilde\nabla
\varphi+\mathcal L_k\mathcal L_k\varphi. 
$$
Hence, the method of proof used in Theorems \ref{theorem4}-\ref{theorem4b} for the elliptic case $\mathcal L_k=\Delta$ can be applied to obtain the same results in this non-degenerate case.
\end{section}
\begin{section}{Proof of Theorem \ref{theorem5} \label{S4}}

 The conformal transformation \eqref{otrasol} with $\nu=\omega=\theta=1$ and $\gamma=0$ 
tells us that
\begin{equation}
\label{newsolution}
w(x,t)=\frac{1}{(1+t)^{n/2}} \,e^{i|x|^2/4(1+t)}\,v(\frac{x}{1+t},\frac{t}{1+t}),
\end{equation}
solves the equation 
\begin{equation}
\label{000}
i \partial_t w + \Delta w \pm \,(1+ t)^{an/2-2}|w|^{a} w=0,
\end{equation}
in the time interval $t\in [0,\infty)$.  Thus, from the hypotheses \eqref{bound1}
it follows that the solution $w(x,t)$ satisfies
\begin{equation}
\label{main}
\begin{aligned}
&|w(x,t)| =\frac{1}{(1+t)^{n/2}}\,\left|v(\frac{x}{1+t},\frac{t}{1+t})\right|\\
&\leq \frac{1}{(1+t)^{n/2}} \,\frac{1}{(1-\tfrac{t}{(1+t)})^{2/a}} Q\left(\frac{\tfrac{x}{(1+t)}}{1-\tfrac{t}{(1+t)}}\right) 
= \frac{1}{(1+t)^{n/2-2/a}}\,Q(x).
\end{aligned}
\end{equation}
Since the potential $V(x,t)$ has the form
$$
V(x,t)=\pm (1+t)^{an/2-2}|w(x,t)|^{a},
$$
from \eqref{main} one sees that it verifies that
\begin{equation}
\label{potential-appl.}
|V(x,t)|\leq (1+t)^{an/2-2}   \left(\frac{1}{(1+t)^{n/2-2/a}}\right)^a\,Q^a(x)= Q^a(x).
\end{equation}

Therefore, since $a\geq 4/n>0$ from our hypothesis \eqref{bound2} or \eqref{bound3} it follows that the potential in \eqref{000} 
satisfies the hypothesis in Theorem \ref{theorem4} and Theorem \ref{theorem4b} with $V_2\equiv 0$. Since the $L^2$-norm of the solution 
$w(x,t)$ is preserved for all $t\geq 0$, Theorem \ref{theorem4} and Theorem \ref{theorem4b} yield the desired result.

\end{section}
\begin{section}{Proofs of  Corollaries \ref{corollary11} and Corollary \ref{corollary12} \label{S5}}

\underline{Proof of Corollary \ref{corollary11}.}
\vskip.1in

We observe that if $u(x,t)$ solves the equation in \eqref{e1}, then
\begin{equation}
\label{7l}
w(x,t)=u(x-\mu\,t\,\vec e,t)\,e^{i(\frac{\mu}{2} x\cdot\vec e- \frac{\mu^2 \,t}{4})},
\end{equation}
is a solution of the equation
\begin{equation}
\label{7m}
\partial_t w = i(\Delta w  + V(x-\mu\,t\,\vec e,t)\,w).
\end{equation}
Thus, from hypothesis \eqref{potcon11a} and \eqref{bpotcon11a} the potential in \eqref{7m} 
\begin{equation}
\label{7n}
W(x,t)\equiv V(x-\mu\,t\,\vec e,t)
\end{equation}
satisfies the conditions on Theorems \ref{theorem4} and \ref{theorem4b}, respetively. Therefore, they can be applied to the equation \eqref{7m} to obtain the result.

In the case of the equation \eqref{e1b} the transformation  \eqref{7l} reads
\begin{equation}
\label{7o}
w(x,t)=u(x-\mu\,t\,\vec e,t)\,e^{i(\frac{\mu}{2} x\cdot\vec e(k)-\frac{\mu^2 t\,\mathcal Q_k(\vec e)}{4})},
\end{equation}
with
\begin{equation}
\label{newvec}
\vec e(k)=(e_1,..,e_k,-e_{k+1},..,-e_n),\;\;\;\;\;\text{if}\;\;\;\;\;\vec e=(e_1,...,e_n),
\end{equation}
and $\mathcal Q_k$ as in \eqref{quadform}. The function $w(x,t)$ satisfies the equation
\begin{equation}
\label{7p}
\partial_t w = i(\mathcal L_k w  + V(x-\mu\,t\,\vec e,t)\,w).
\end{equation}
Hence, the potential
\begin{equation}
\label{7q}
W(x,t)\equiv V(x-\mu\,t\,\vec e,t)
\end{equation}
and the solution $w(x,t)$ of \eqref{7p} satisfies the requirements in Theorem \ref{theorem2}. 

\vskip.1in

\underline{Proof of Corollary \ref{corollary12}.}
\vskip.1in
If $u(x,t)$ is a solution of the equation \eqref{f1}
$$
 \partial_t u=i(\Delta u+ F(u,\overline u) \,u),
 $$
 then
 \begin{equation}
\label{7r}
v(x,t)=u(x-\mu\,t\,\vec e,t)\,e^{i(\frac{\mu}{2} x\cdot\vec e-\frac{\mu^2 t}{4})},
\end{equation}
satisfies the equation
\begin{equation}
\label{7s}
\partial_t v = i(\Delta v  + F(e^{-i(\frac{\mu}{2} x\cdot\vec e-\frac{\mu^2 t}{4})}v, e^{i(\frac{\mu}{2} x\cdot\vec e-\frac{\mu^2 t}{4})}\overline v)\, v).
\end{equation}
So in this case from the hypothesis on $F(z,\overline z)$ the potential
\begin{equation}
\label{7t}
W(x,t)\equiv F(e^{-i(\frac{\mu}{2} x\cdot\vec e-\frac{\mu^2 t}{4})}v, e^{i(\frac{\mu}{2} x\cdot\vec e-\frac{\mu^2 t}{4})}\,\overline v),
\end{equation}
verifies that
$$
|W(x,t)|\leq M(|v(x,t)|+|v(x,t)|^j)=M(|u(x-2\mu\,\vec e\,t,t)|+|u(x-2\mu\,\vec e\,t,t)|^j).
$$
Thus, the assumption \eqref{ine} guarantees that we can use Corollary \ref{corollary13} and Theorem \ref{theorem4b} to achieve the result.

In the case of the equation \eqref{f2}
$$
 \partial_t u=i(\mathcal L_k u+ F(u,\overline u) u),
 $$
 one just needs to define $v(x,t)$ as
  \begin{equation}
\label{7v}
v(x,t)=u(x-\mu\,t\,\vec e,t)\,e^{i(\frac{\mu}{2} x\cdot\vec e(k)-\frac{\mu^2 t \mathcal Q_k(\vec e)}{4})},
\end{equation}
with $\vec e(k)$ as in \eqref{newvec} and $\mathcal Q_k$ as in \eqref{quadform}. Since $v(x,t)$ solves the equation
 \begin{equation}
\label{7u}
\partial_t v = i(\mathcal L_k v  + F(e^{-i(\frac{\mu}{2} x\cdot\vec e(k)-\frac{\mu^2 t \mathcal Q_k(\vec e)}{4})}v, 
e^{i(\frac{\mu}{2} x\cdot\vec e(k)-\frac{\mu^2 t \mathcal Q_k(\vec e)}{4})}\overline v) \,v),
\end{equation}
one just needs to follow the argument given in the case of the equation  \eqref{f1} to obtain the desired result.
\end{section}

\begin{section}{Proofs of  Theorem  \ref{theorem20} and Theorem  \ref{theorem20a} \label{S6}}

\underline{Proof of Theorem \ref{theorem20a}.}
\vskip.1in
We have 
$$
e^{\tau |x|} (\Delta +\widetilde V_2) e^{-\tau |x|}=\mathcal S+\mathcal A, 
$$
where
\begin{equation}
\label{defSA}
\mathcal S=\Delta +\widetilde V_2+\tau^2,\;\;\;\;\;\;\; \mathcal A= -\frac\tau{|x|}\left(2x\cdot\nabla+n-1\right).
\end{equation}
Hence, the commutator of $\mathcal S$ and $\mathcal A$ is
\begin{equation*}
[\mathcal S;\mathcal A]=-4 \tau\, \partial_j\cdot((\,\frac{\delta_{jk}}{|x|}-\frac{x_jx_k}{|x|^3})\partial_k\,)+ \frac{ \tau \,(n-1)(n-3)}{|x|^3}+\tau\,\partial_r \widetilde V_2.
\end{equation*}

Let $g\in C_0^\infty(\Rn\setminus \overline {B_\rho})$ and set $f=e^{\tau |x|}g$. Then,
\begin{equation}
\label{E: primera desigualdad}
\begin{aligned}
\|e^{\tau |x|}(\Delta +\widetilde V_2)g\|_2^2&=\|\mathcal Sf\|_2^2+\|\mathcal Af\|_2^2+\int_{\R^n}[\mathcal S;\mathcal A]f\overline f\,dx\\
&=\|\mathcal Sf\|_2^2+\|\mathcal Af\|_2^2+\tau\int_{\R^n} \frac{4}{|x|}\,\left(|\nabla f|^2-|\partial_r f|^2\right)\\
&+\tau\int_{\R^n}\left(\frac{(n-1)(n-3)}{|x|^3}+\partial_r \widetilde V_2\right)|f|^2\,dx,
\end{aligned}
\end{equation}
with $\partial_r f= \frac{x}{|x|} \cdot\nabla f $ and
\begin{equation}
\label{E: segunda desigualdad}
\begin{aligned}
\|\mathcal Af\|_2&= \tau\,\|2\partial_r f+\frac{n-1}{|x|}\,f\|_2\ge \sqrt{\tau}\,\|2\partial_r f+\frac{n-1}{|x|}f\, \|_2\\
&\ge 2\sqrt{\tau}\,\|\partial_r f\|_2-\sqrt\tau\,(n-1)\,\| |x|^{-1} \,f\|_2
\\
&\ge \sqrt{\tau\rho}\,\||x|^{-1/2}\partial_r f\|_2-\sqrt{\tau/\rho}\,\||x|^{-1/2}f\|_2
\end{aligned}
\end{equation}
for $\tau\ge 1$. Combining our hypotheses on the potential \eqref{20a}-\eqref{20c}, \eqref{E: primera desigualdad} and \eqref{E: segunda desigualdad} one gets that 
\begin{equation}\label{E: tercera desiguldad}
\|\mathcal Sf\|_2+\sqrt{\tau\rho}\, \||x|^{-1/2}\nabla f\|_2\le \|e^{\tau |x|}(\Delta +\widetilde V_2) g\|_2+\sqrt{\tau}/\rho\,\||x|^{-1/2}f\|_2.
\end{equation}
Thus using \eqref{defSA} it follows that
\begin{equation}
\label{abc1}
\begin{aligned}
\tau^3&\int_{\R^n} \frac{|f|^2}{|x|}\,dx=\tau\,\Re\int_{\R^n}\frac{1}{|x|}\left[\mathcal Sf\,\overline f-\Delta f\,\overline f-\widetilde V_2|f|^2\right]\,dx\\
\\
&=\tau\,\Re\int_{\R^n}\frac{1}{|x|}\mathcal Sf\,\overline f\,dx-\tau\int_{\Rn}\frac{1}{|x|}\left[\frac 12\Delta |f|^2-|\nabla f|^2+\widetilde V_2|f|^2\right]\,dx\\
\\
&=\tau\,\Re\int_{\R^n}\frac{1}{|x|}\left[\mathcal Sf\,\overline f+|\nabla f|^2+\frac{(n-3)}2 \frac{|f|^2}{|x|^2}-\widetilde V_2|f|^2\right]\,dx.
\end{aligned}
\end{equation}
The last identity, our hypotheses on the potential \eqref{20a}-\eqref{20c}, \eqref{E: tercera desiguldad} and the Cauchy-Schwarz inequality show that Theorem \ref{theorem20a} holds for $\tau\ge\tau_0$ with 
$\tau_0=\tau_0(n, \|\widetilde V\|_{\infty};c_1; c_2;\rho)$.

\vskip.1in

\underline{Proof of Theorem \ref{theorem20}.}
\vskip.1in

We fix $\phi\in C^{\infty}_0(\R^n)$ such that $\phi$ is positive, with $\phi(x)=1,\;|x|\leq 1$ and $supp\,\phi \subset\{x\,:\,|x|\leq2\}$ and rewrite the equation \eqref{eigen} as
\begin{equation}
\label{mia1}
\Delta u +\widetilde V(x) u -\zeta u=\Delta u +\widetilde{\widetilde V}(x)u= \Delta u + \widetilde V_1(x) u + \widetilde V_2(x) u=0,
\end{equation}
with
\begin{equation}
\label{mia2}
\widetilde V_1(x)=\widetilde V(x)-\zeta \phi(x),\;\;\;\;\;\widetilde V_2(x)= -\zeta(1-\phi(x)).
\end{equation}
Thus, $\widetilde V_1,\,\widetilde V_2$ satisfy the hypotheses of Theorems \ref{theorem20} and \ref{theorem20a}.   
We shall define $\phi_L$ as
$$
\phi_L(x)=\phi(x/L),\;\;\;\;\;L>0.
$$
   
 \underline{Claim} :  There exist $\rho_0\in [0,1)$ and $M=M(n)$ such that
 \begin{equation}
 \label{mia3}
 \begin{aligned}
 \|u\|^2_{L^2(B_{4\rho_0})}&=\int_{|x|\leq 4\rho_0} \,|u(x)|^2dx \\
 &\leq M\, \|u\|^2_{L^2(B_{10\rho_0}-B_{5\rho_0})}=\int_{5\rho_0\leq |x|\leq 10\rho_0}|u(x)|^2dx.
 \end{aligned}
 \end{equation}
 
 \underline{Proof of the claim : }
   Multiplying the equation \eqref{mia1} by $u \,\phi^2_{5\rho}$, with $\rho$ to be determined and integrating the result one gets
   \begin{equation}
   \label{mia4}
   -\int |\nabla u|^2\phi^2_{5\rho} \,dx + 
   \int |u|^2 (2 |\nabla \phi_{5\rho}|^2+\phi_{5\rho} \Delta \phi_{5\rho}) \,dx+ \int \Re({\widetilde{\widetilde V}}) |u|^2\phi^2_{5\rho} \,dx.
   \end{equation}
   
  Combining \eqref{mia4} and Poincare inequality one has that
 \begin{equation}
 \label{mia5}
 \begin{aligned}
&  \int |u\phi_{5\rho}|^2dx\leq (10\rho)^2\,\int |\nabla (u\phi_{5\rho}|^2dx\\
&\leq (10\rho)^2\,\int |\nabla u|^2 \phi_{5\rho}^2 dx + c_n \int |u|^2 \phi_{5\rho} |\nabla \phi_{5\rho}|dx\\
&\leq (10\rho)^2( c_n\int_{B_{10\rho}-B_{5\rho}} |u|^2dx +\| \widetilde{\widetilde V}\|_{\infty}\int |u\phi_{5\rho}|^2dx)
+c_n  \int_{B_{10\rho}-B_{5\rho}} |u|^2dx.
\end{aligned}
\end{equation} 
Fixing $\rho_0$ small enough, depending on the $\| \widetilde{\widetilde V}\|_{\infty}$, we establish the claim \eqref{mia3}.

Next, we apply Theorem {theorem20a} to $u\, \Phi=u\,\Phi_{\rho,R}$ where $\Phi\in C^{\infty}_0(\R^n)$ with
$\Phi(x)=1,\,4\rho\leq|x|\leq R$, $\Phi(x)=0,\,|x|\geq 2R$, $\Phi(x)=0,\,|x|\leq 2\rho$ 
with $R>10$ and $\rho\in(0,1)$ to get that
\begin{equation}
\label{mia6}
\begin{aligned}
&\tau^3 \,\| \,|x|^{-1/2} e^{\tau|x|}(u\Phi)\|_2^2\leq \|\,e^{\tau|x|}(\Delta +\widetilde{\widetilde V})(u\Phi)\|_2^2\\
&\leq 4\|\,e^{\tau|x|}\,\nabla u\cdot\nabla\Phi\|_2^2+ 2\|\,e^{\tau|x|}\,u \Delta\Phi)\|_2^2\\
&\leq 4\|\,e^{\tau|x|}\,\nabla u\cdot\nabla\Phi\|_2^2+ 2 c_n \|\,e^{\tau|x|}\,u\|^2_{L^2((B_{2R}-B_R)\cup (B_{4\rho}-B_{2\rho}))}.
\end{aligned}
\end{equation}

   Using integrations by part and the equation \eqref{mia1} one gets that that
 \begin{equation}
 \label{mia7}
   \|\,e^{\tau|x|}\,\nabla u\cdot\nabla\Phi\|_2^2\leq c_n(\|\widetilde{\widetilde V}\|_{\infty}+\tau^2 +\frac{\tau}{\rho}\,)\,\|\,e^{\tau|x|}\,u\cdot\nabla\Phi\|_2^2.
   \end{equation}
   Therefore
$$
A_1\equiv \tau^3 \,\| \,|x|^{-1/2} e^{\tau|x|}(u\Phi)\|_2^2\leq   c_n(\|\widetilde{\widetilde V}\|_{\infty}+\tau^2 +\frac{\tau}{\rho}\,)\,\|\,e^{\tau|x|}\,u\|^2_{L^2((B_{2R}-B_R)\cup (B_{4\rho}-B_{2\rho}))}\equiv A_2.
$$
   
On one hand one has that
$$
A_1\geq \tau^3\,\|\,\frac{e^{\tau|x|}u}{|x|^{1/2}}\|_{L^2(B_R-B_{2\rho})}\geq c_n\,\frac{\tau^3}{\rho}   \,\|e^{\tau|x|}u\|_{L^2(B_{10\rho}-B_{5\rho})}
\geq c_n\,\frac{\tau^3}{\rho} \,e^{10\tau\rho}  \,\|u\|_{L^2(B_{10\rho}-B_{5\rho})}.
 $$  
 On the other hand,
 $$
 A_2\leq c_n(\|\widetilde{\widetilde V}\|_{\infty}+\tau^2 +\frac{\tau}{\rho}\,)\,e^{8\tau\rho}\,\|u\|^2_{L^2(B_{4\rho})} +
 c_n(\|\widetilde{\widetilde V}\|_{\infty}+\tau^2 +\frac{\tau}{\rho}\,)\,e^{4\tau R}\,\|u\|^2_{L^2(B_{2R}-B_R)} .
 $$
 
 Therefore, fixing $\rho=\rho_0$ as in the claim it follows that
 \begin{equation}
 \label{mia9}
 \begin{aligned}
 &M\,\frac{\tau^3}{\rho_0}\,e^{10\tau\,\rho_0} \,\|u\|^2_{L^2(B_{4\rho_0})}\,\leq \frac{\tau^3}{\rho_0} \,e^{10\tau\rho_0}  \,\|u\|_{L^2(B_{10\rho}-B_{5\rho})}\\
 &\leq c_n(\|\widetilde{\widetilde V}\|_{\infty}+\tau^2 +\frac{\tau}{\rho_0}\,)\,e^{8\tau\rho}\,\|u\|^2_{L^2(B_{4\rho)}} +
 c_n(\|\widetilde{\widetilde V}\|_{\infty}+\tau^2 +\frac{\tau}{\rho_0}\,)\,e^{4\tau R}\,\|u\|^2_{L^2(B_{2R}-B_R)} .
\end{aligned}
\end{equation}

 Therefore, for $\tau$ sufficiently large but independently of $R>10$ it follows that
 $$
 \|u\|^2_{L^2(B_{2R}-B_R)}\geq c_n\,e^{10\tau \rho_0}\,e^{-4\tau R}\,\|u\|^2_{L^2(B_{4\rho_0})}.
 $$
 
 Finally, taking $\lambda_0> 2\tau$ one has
 \begin{equation}
 \label{mia07}
 \begin{aligned}
& \infty> \int e^{2\lambda_0|x|} |u(x)|^2dx\geq \sum_{k=1}^{\infty} \int_{2^{k-1}R\leq |x|\leq 2^k} e^{2\lambda_0|x|} |u(x)|^2dx\\
& \geq \sum_{k=1}^{\infty} e^{2^k\lambda_0 R}\,\int_{2^{k-1}R\leq |x|\leq 2^k} |u(x)|^2dx\\
& \geq \sum e^{2^k R\lambda_0} e^{-2^{k+1}\tau R} e^{8\tau \rho_0} \|u\|^2_{L^2(B_{4\rho_0})},
 \end{aligned}
 \end{equation}
 which gives a contradiction  except if $\|u\|^2_{L^2(B_{4\rho_0})}=0$.
\end{section}

\begin{section}{Appendix \label{S7}}

\underbar {Part (a)}: We recall that $p\in(1,4/3]$. The aim is to find 
\begin{equation}
\label{g1}
\varphi(r)=a_0+a_1 r^2+a_2 r^4+ a_3 r^6 + a_4 r^8,\;\;\;\;\;\;r\in[0,1],
\end{equation}
such that
\begin{equation}
\label{g2}
\begin{aligned}
&\varphi(1)=d_0,\;\;\,\varphi'(1)=d_1,\;\;\, \varphi^{(2)}(1)=d_2>0,\\
&\varphi^{(3)}(1)=d_3<0,\;\;\;\varphi^{(4)}(1)=d_4>0.
\end{aligned}
\end{equation}
for prescribed values $d_0,...,d_4$ such that $\varphi(0)=0$ and $\varphi$ is strictly convex for $r\in[0,1]$.
Since in Theorem \ref{theorem4}   $\varphi(r)=r^p+\beta,\;\;r\geq 1$ one has 
\begin{equation}
\label{g3}
\begin{aligned}
&d_0=1+\beta,\;\;\;\;\;d_1=p>0,\;\;\;\;d_2=p(p-1)>0,\\
&d_3=p(p-1)(p-2)<0,\;\;\;\;\;d_4=p(p-1)(p-2)(p-3)>0.
\end{aligned}
\end{equation}

So we solve the system
\begin{equation}
\label{g4}
\begin{aligned}
\begin{cases}
& a_0+a_1+\;\;\;a_2+\;\;\;\;\;a_3+\;\;\;\;\;a_4=d_0=1+\beta,\\
&\;\;\;\;\;\;2a_1+\;4a_2+\;\;\;\;6a_3+\;\;\;\;8a_4=d_1=p,\\
&\;\;\;\;\;\;2a_1+12a_2+\;\;30a_3+\;\;56a_4=d_2=p(p-1),\\
&\;\;\;\;\;\;\;\;\;\;\;\;\;\;\;24 a_2+\;120a_3+\;336a_4=d_3=p(p-1)(p-2),\\
&\;\;\;\;\;\;\;\;\;\;\;\;\;\;\;24 a_2+360a_3+1680a_4=d_4=p(p-1)(p-2)(p-3).
\end{cases}
\end{aligned}
\end{equation}
After some computations one sees that
\begin{equation}
\label{g5}
\begin{aligned}
&a_1=\frac{p}{6\cdot16}(192-104p+18p^2-p^3)>\frac{p}{2},\;\;\;\;a_2=\frac{p(p-2)}{4\cdot16}(p-6)(p-8),\\
&a_3=\frac{-p(p-2)}{6\cdot16}(p-4)(p-8),\;\;\;\;\;\;\;\;\;\;\;\;\;\;\;\;\;\;\;\;\;a_4=\frac{p(p-2)}{24\cdot 16}(p-4)(p-6).
\end{aligned}
\end{equation}
Next, we shall see  that this $\varphi$ is convex in $r\in[0,1]$. From \eqref{g4} and \eqref{g5} one has
\begin{equation}
\label{g6}
\varphi^{(2)}(1)=p,\;\;\;\;\;\;\;\varphi^{(2)}(0)=2a_1>p,
\end{equation}
so it will suffice to show that 
\begin{equation}
\label{g7}
\begin{aligned}
&\varphi^{(3)}(r) =24r (a_2+5a_3r^2+14a_4r^4)\\
&\;=24 r \,\frac{p(p-2)}{12\cdot 16}\left(3 (p-6)(p-8)-10(p-4)(p-8)r^2+7(p-2)(p-6)r^4\right)
\end{aligned}
\end{equation}
has no critical points in $(0,1)$. After some computations one finds that the discriminant $\mathcal D$ of the quadratic equation (in $r^2$) in 
\eqref{g7} is
\begin{equation}
\label{g8}
\begin{aligned}
\mathcal D&=(p-4)(p-8)\left(10^2(p-4)(p-8)-84(p-6)^2\right)\\
&=16(p-1)(p-4)(p-8)(p-11)<0,
\end{aligned}
\end{equation}
because $p\in(1,4/3)$. Since $\varphi^{(3)}$ has no critical points \eqref{g6} tells us that $\varphi$ is strictly convex in $[0,1]$.
Taking $\beta$ in \eqref{g4} as
$$
\beta=a_1+a_2+a_3+a_4-1,
$$
it follows that $\varphi(0)=a_0=0$. Finally, if $ \phi(r)=r^p$ 
$$
\varphi(0)=\varphi'(0)=\phi(0)=\phi'(0)=0,\;\;\;\;\;\;\phi^{(2)}(r)=p(p-1)r^{p-2}\geq p(p-1) \;\;\;r\in(0,1).
$$
Thus, there exists $M_0>0$ such that
$$
M_0 \,p(p-1) \geq \sup_{0\leq r\leq 1} |\varphi^{(2)}(r)|.
$$
Finally, taking $M=max \{M_0;\beta\}$ 
one gets that
$$
\varphi(r)\leq M r^p,\;\;\;\;\;\;\;\forall \;r\geq 0,
$$
which completes the proof.

\vskip.1in

\underbar {Part (b)}: As in the proof of Theorem \ref{theorem4b} we choose
$$
\varphi(r)=3r-\int_1^r\frac{dt}{1+\log t}+\beta,
$$
so in this case we have
\begin{equation}
\label{g3b}
d_0=3+\beta,\;\;\;\;\;\;d_1=2,\;\;\;\;\;d_2=1,\;\;\;\;\;d_3=-3,\;\;\;\;\;\;d_4=14.
\end{equation}
Solving the system \eqref{g4} with these values of $(d_0,d_1,..,d_4)$ one gets
\begin{equation}
\label{g4b}
\varphi(r)=a_0+\frac{103}{96}r^2+\frac{9}{64}r^4-\frac{17}{96}r^6+\frac{17}{24\cdot 16}r^8,\;\;r\in[0,1].
\end{equation}
To show that $\varphi $ is convex in $[0,1]$, we consider
\begin{equation}
\label{g5b}
\varphi^{(2)}(r)=\frac{1}{48}(103+81r^2-225r^4+119r^6),\;\;r\in[0,1],
\end{equation}
and recall that 
\begin{equation}
\label{g10b}
\varphi^{(2)}(0)=103/48,\;\;\;\;\;\varphi^{(2)}(1)=1.
\end{equation}  We look for critical points of 
\begin{equation}
\label{g6b}
\varphi^{(3)}(r)=\frac {r}{8}(27-150 r^2+ 119r^4),\;\;r\in(0,1).
\end{equation}
There is  only one critical point the point $r_0\in (0,1]$ with
\begin{equation}
\label{g7b}
r_0^2=\frac{150-\sqrt{(150)^2-4\cdot119\cdot27}}
{2.119}=\frac{150-\sqrt{9648}}{238}\in(0,1).
\end{equation}
Since
\begin{equation}
\label{g9b}
\varphi^{(2)}(r_0)\geq 110/48,
\end{equation}
combining \eqref{g9b} and \eqref{g10b} it follows that $\varphi $ is convex in $[0,1]$. Finally, taking $\beta$ in \eqref{g3b} such that
$$
\beta=a_1+a_2+a_3+a_4-1,
$$
it follows that $\varphi(0)=a_0=0$. Finally, an argument similar to that at the end of part (a) shows
$$
\varphi(r)\leq M r,\;\;\;\;\;\;\;\forall \;r\geq 0,
$$
which provides the desired result.
\vskip.1in

\vskip.2in
{\underbar{ACKNOWLEDGMENT}}  : The authors would like to thank J. C. Saut for fruitful conversations concerning this work.

\end{section}




\begin{thebibliography}{99}

\bibitem{AbHa}  M. J. Ablowitz, R. Haberman,
       \emph{ Nonlinear evolution equations in two and three dimensions},        
        Phys. Rev. Lett., {\bf 35}        
        (1975), 1185--1188.

\bibitem{BeLi} H. Berestycki, P.-L. Lions, \emph{Nonlinear scalar field equations}, Arch. Rational Mech. Anal., {\bf 82} (1983), 313-375.

\bibitem{CW2} T. Cazenave, F. Weissler, {The Cauchy problem for the critical  nonlinear Schr\"odinger equation in $H^s$},    Nonlinear Analysis TMA      {\bf 14}   (1990)  807-836.

\bibitem{SP} J. Cruz-Sampedro, \emph{Unique continuation at infinity of 
solutions to Schr\"odinger  equations with complex potentials,} Proc. Edinburgh Math. Soc., {\bf 42}, (1999), 
143--153.

\bibitem{DaSt} A.  Davey, K. Stewartson, \emph{On three-dimensional packets of surface waves,}
Proc. Royal London Soc. A {\bf 338} (1974), 101-110.


\bibitem{GhSa} J. M. Ghidaglia, J. C. Saut, \emph{Nonexistence of travelling wave solutions to nonelliptic nonlinear Schr¬odinger equations,}
J. Nonlinear Sci. {\bf 6} (1996),  139Ð145.



\bibitem{GiVe}  J. Ginibre, G. Velo,  \emph{On a class of nonlinear Schr\"odinger equations,}
J. Funct. Anal. {\bf 32} (1979), 1-71.




\bibitem{ekpv06} L. Escauriaza, C. E. Kenig, G. Ponce, L. Vega,  \emph{On
Uniqueness  Properties of Solutions of Schr\"odinger  Equations,} Comm.
PDE. {\bf 31}, 12 (2006), 1811--1823.



\bibitem{ekpv08} L. Escauriaza, C. E. Kenig, G. Ponce, L. Vega,
\emph{Convexity of Free Solutions of Schr\"odinger Equations with Gaussian
Decay,} Math. Res. Letters, {\bf 15}, (2008),  957-972.


\bibitem{ekpv08b} L. Escauriaza, C. E. Kenig, G. Ponce, L. Vega,
\emph{Hardy's uncertainly principle, convexity and Schr\"odinger
eqautions,}  Journal European Math.  Soc. {\bf 10}  (2008), 882-907.

\bibitem{ekpv09} L. Escauriaza, C. E. Kenig, G. Ponce, L. Vega,
\emph{The sharp Hardy Uncertainty Principle for Schr\"odinger evolutions}, to appear in Duke Math. J.

\bibitem{ekpv10} L. Escauriaza, C. E. Kenig, G. Ponce, L. Vega,
\emph{Uncertainty Principle of Morgan type and
Schr\"odinger Evolutions}, to appear in Journal London Math. Soc.

\bibitem {Is} Y. Ishimori, 
  \emph{Multi vortex solutions of a two dimensional nonlinear wave 
equation,}
  Progr. Theor. Phys., {\bf 72} (1984), 33--37





\bibitem{Mes} V. Z.  Meshkov, \emph{On the possible rate of decay at infinity of solutions of second-oreder partial differential equations,} Math. USSR Sbornik  {\bf 72} (1992),  343Ð361.

\bibitem{St} W. A. Strauss, 
 \emph{Existence of solitary waves in higher dimensions,} Comm. math. Phys., {\bf 55}
(1977), 149-162.


\bibitem{Str2} R. S. Strichartz, R. S., {Restriction of Fourier transforms to quadratic surface and decay
        of solutions of wave equations},  Duke Math. J. {\bf 44} (1977) 705-714.



\end{thebibliography}
\end{document}